\newtheorem{theorem}{Theorem}[section]
\newtheorem{lemma}[theorem]{Lemma}
\newtheorem{definition}[theorem]{Definition}
\newtheorem{remark}[theorem]{Remark}
\numberwithin{equation}{section}
\newtheorem{assumption}{Assumption}
\newcommand{\R}{{\mathbb{R}}}
\newcommand{\N}{{\mathbb{N}}}
\newcommand{\intcc}[1]{\ensuremath{{\left[#1\right]}}}
\newcommand{\intoc}[1]{\ensuremath{{\left]#1\right]}}}
\newcommand{\intco}[1]{\ensuremath{{\left[#1\right[}}}
\newcommand{\intoo}[1]{\ensuremath{{\left]#1\right[}}}
\newcommand{\as}{\overset{\ra}{s}}
\newcommand{\st}{{\rm s.t.}}
\DeclareMathOperator{\im}{im}
\DeclareMathOperator{\ke}{ker}
\newcommand{\ra}{\rightarrow}
\newcommand{\ol}{\overline}
\newcommand{\Let}{:=}
\begin{document}

\begin{abstract}
In this paper we propose a compositional scheme for the construction of abstractions for networks of control systems using the interconnection matrix and joint dissipativity-type properties of subsystems and their abstractions. In the proposed framework, the abstraction, itself a control system (possibly with a lower dimension), can be used as a substitution of the original
system in the controller design process. Moreover, we provide a procedure for constructing abstractions of a class of nonlinear control systems by using the bounds on the slope of system nonlinearities. We illustrate the proposed results on a network of linear control systems by constructing its abstraction in a compositional way without requiring any condition on the number or gains of the subsystems. We use the abstraction as a substitute to
synthesize a controller enforcing a certain linear temporal logic
specification. This example particularly elucidates the effectiveness of dissipativity-type compositional reasoning for large-scale systems.
\end{abstract}

\title[Compositional abstraction for networks of control systems: A dissipativity approach]{Compositional abstraction for networks of control systems: A dissipativity approach}

\author[M. Zamani]{Majid Zamani$^1$} 
\author[M. Arcak]{Murat Arcak$^2$} 
\address{$^1$Department of Electrical and Computer Engineering, Technical University of Munich, D-80290 Munich, Germany.}
\email{zamani@tum.de}
\urladdr{http://www.hcs.ei.tum.de}
\address{$^2$Department of Electrical Engineering and Computer Sciences, University of California, Berkeley, CA, USA.}
\email{arcak@berkeley.edu}
\urladdr{http://www.eecs.berkeley.edu/$\sim$arcak}

\maketitle

\section{Introduction}
Modern applications, e.g. power networks, biological networks, internet congestion control, and manufacturing systems,
are large-scale networked systems and inherently difficult to analyze and control. Rather than tackling the network as a whole, an approach that severely restricts the capability of existing techniques to deal with many numbers of subsystems, one can develop compositional schemes that provide network-level certifications from main structural properties of the subsystems and their interconnections. 

In the past few years, there have been several results on the compositional abstractions of control systems. Early results include compositional abstractions of control systems \cite{TabuadaPappasLima04,Frehse05,KvdS10} which are useful for verification rather than synthesis. Those results employ exact notions of abstractions based on simulation relations \cite{Frehse05,KvdS10}
and simulation maps \cite{TabuadaPappasLima04}, for which constructive methodologies
exist only for rather restricted classes of control systems. In contrast to the exact notions, the compositional approximate abstractions were introduced recently which are useful for the controller synthesis. Examples include compositional construction of \emph{finite} abstractions of linear and nonlinear control systems \cite{tazaki1,PPdB14} and of \emph{infinite} abstractions of nonlinear control systems \cite{majid17,majid20} and a class of stochastic hybrid systems \cite{majid19}. In those works, the abstraction (finite or infinite with possibly a lower dimension) can be used as a substitution of the original
system in the controller design process. The proposed results in \cite{tazaki1,PPdB14,majid17,majid20,majid19} use the small-gain type conditions to facilitate the compositional construction of abstractions. The resulting small-gain type requirements intrinsically condition the spectral radius of the interconnection matrix which, in general, depends on the size of the graph and can be violated or deteriorated as the number of subsystems grows \cite{das}.

In this work we propose a novel compositional framework for the construction of infinite abstractions of networks of control systems using dissipativity theory. 
First, we adapt the notion of storage function from dissipativity theory \cite{murat16} to quantify the joint dissipativity-type properties of control subsystems and their abstractions. Given a network of control subsystems and their storage functions, we propose conditions based on the interconnection matrix and joint dissipativity-type properties of subsystems and their abstractions guaranteeing that the network of abstractions quantitatively approximate the behaviours of the network of concrete subsystems. The proposed compositionality conditions can enjoy specific interconnection structures and provide scale-free compositional abstractions for large-scale control systems without requiring any condition on the number or gains of the subsystems; we illustrate this point with an example in Section \ref{case}. Furthermore, we provide a geometric approach on the construction of abstractions for a class of nonlinear control systems and of their corresponding storage functions by using the bounds on the slope of system nonlinearities. 

{\bf Related Work.} 
Compositional construction of infinite abstractions of networks of control systems is also proposed in \cite{majid17,majid20}. While in \cite{majid17,majid20} small-gain type conditions are used to facilitate the compositional construction of abstractions, here we use dissipativity-type conditions. The small-gain type requirements inherently condition the spectral radius of the interconnection matrix which, in general, depends on the size of the graph and can be dissatisfied as the number of subsystems grows \cite{das}. On the other hand, this is not necessarily the case with broader dissipativity-type conditions and in fact the compositionality requirements may not condition the number or gains of the subsystems at all when the interconnection matrix enjoys some properties (cf. Section \ref{case}). Although the results in \cite{majid17,majid20} provide constructive procedures to
determine abstractions of linear control systems, we propose techniques on the construction of abstractions for a class of nonlinear control systems by using the bounds on the slope of systems nonlinearities. The results in \cite{majid17,majid20} assume that the internal input and output space dimensions of each component in a network are equal to the corresponding ones of its abstraction which is not the case in this paper. While the interconnection matrix in \cite{majid17,majid20} is a permutation one, the one in this paper can be any general interconnection matrix.

The recent results in \cite{murat16,meissen} establish only stability or stabilizability of networks of control systems compositionally using dissipativity properties of components. On the other hand, the results here provide construction of abstractions of networks of control systems compositionally using abstractions of components and their joint dissipativity-type properties.

\section{Control Systems}
\subsection{Notation}
The sets of nonnegative integer and real numbers are denoted by $\N$ and $\R$, respectively. Those symbols are footnoted with subscripts to restrict them in
the usual way, e.g. $\R_{>0}$ denotes the positive real numbers. The symbol $ \R^{n\times m} $ denotes the vector space of real matrices with $ n $ rows and $ m $ columns. The symbols $\mathbf{1}_{n}$, $0_n$, $I_n$, and $0_{n\times{m}}$ denote the vector in $\R^n$ with all its elements to be one, the zero vector, identity and zero matrices in $\R^n$, $\R^{n\times n}$, and $\R^{n\times{m}}$, respectively. For $a,b\in\R$ with $a\le b$, the closed, open, and half-open intervals in $\R$ are denoted by $\intcc{a,b}$,
$\intoo{a,b}$, $\intco{a,b}$, and $\intoc{a,b}$, respectively. For $a,b\in\N$ and $a\le b$, the symbols $\intcc{a;b}$, $\intoo{a;b}$, $\intco{a;b}$, and $\intoc{a;b}$
denote the corresponding intervals in $\N$.
Given $N\in\N_{\geq1}$, vectors $x_i\in\R^{n_i}$, $n_i\in\N_{\geq1}$ and $i\in\intcc{1;N}$, we
use $x=[x_1;\ldots;x_N]$ to denote the concatenated vector in $\R^n$ with
$n=\sum_{i=1}^N n_i$. Given a vector \mbox{$x\in\mathbb{R}^{n}$}, $\Vert x\Vert$ denotes the Euclidean norm of $x$. 
Note that given any $x\in\R^N$, $x\geq0$ iff $x_i\geq0$ for any $i\in[1;N]$. 
Given a symmetric matrix $A$, $\lambda_{\max}(A)$ and $\lambda_{\min}(A)$ denote maximum and minimum eigenvalues of $A$. We denote by $\mathsf{diag}(M_1,\ldots,M_N)$ the block diagonal matrix with diagonal matrix entries $M_1,\ldots,M_N$.  

Given a function $f:\R^n\rightarrow \R^m$ and $0_m\in\R^m$, we simply use
$f\equiv 0$ to denote that $f(x)=0_m$ for all $x\in\R^n$. Given a function \mbox{$f:\mathbb{R}_{\geq0}\rightarrow\mathbb{R}^n$}, the (essential) supremum of $f$ is denoted by $\Vert f\Vert_{\infty} \Let \text{(ess)sup}\{\Vert f(t)\Vert,t\geq0\}$. A continuous function \mbox{$\gamma:\mathbb{R}_{\geq0}\rightarrow\mathbb{R}_{\geq0}$}, is said to belong to class $\mathcal{K}$ if it is strictly increasing and \mbox{$\gamma(0)=0$}; $\gamma$ is said to belong to class $\mathcal{K}_{\infty}$ if \mbox{$\gamma\in\mathcal{K}$} and $\gamma(r)\rightarrow\infty$ as $r\rightarrow\infty$. A continuous function $\beta:\mathbb{R}_{\geq0}\times\mathbb{R}_{\geq0}\rightarrow\mathbb{R}_{\geq0}$ is said to belong to class $\mathcal{KL}$ if, for each fixed $t$, the map $\beta(r,t)$ belongs to class $\mathcal{K}$ with respect to $r$ and, for each fixed nonzero $r$, the map $\beta(r,t)$ is decreasing with respect to $t$ and $\beta(r,t)\rightarrow 0$ as \mbox{$t\rightarrow\infty$}.

\subsection{Control systems}\label{control_system}
The class of control systems studied in this paper is formalized in
the following definition.
\begin{definition}
\label{Def_control_sys}A \textit{control system} $\Sigma$ is a tuple
$\Sigma=(\mathbb{R}^{n},\R^m,\R^p,\mathcal{U},\mathcal{W},f,\R^{q_1},\R^{q_2},h_1,h_2)$, where $\R^n$, $\R^m$, $\R^p$, $\R^{q_1}$, and $\R^{q_2}$ are the state, external input, internal input, external output, and internal
output spaces, respectively, and
\begin{itemize}
\item $\mathcal{U}$ and $\mathcal{W}$ are subsets of the sets of all measurable functions of time, from open intervals in $\mathbb{R}$ to $\R^m$ and $\R^p$, respectively; 
\item \mbox{$f:\mathbb{R}^{n}\times \R^m\times\R^p\rightarrow\mathbb{R}^{n}$} is a continuous map
satisfying the following Lipschitz assumption: for every compact set
\mbox{$\mathsf{D}\subset\mathbb{R}^{n}$}, there exists a constant $Z\in\mathbb{R}_{>0}$ such that $\Vert
f(x,u,w)-f(y,u,w)\Vert\leq Z\Vert x-y\Vert$ for all $x,y\in \mathsf{D}$, all $u\in\R^m$, and all $w\in\R^p$;
\item $h_1:\R^n\rightarrow\R^{q_1}$ is the external output map;
\item $h_2:\R^n\rightarrow\R^{q_2}$ is the internal output map.
\end{itemize}
\end{definition}

A locally absolutely continuous curve \mbox{$\xi:]a,b[\rightarrow\mathbb{R}^{n}$} is a
\textit{state trajectory} of $\Sigma$ if there exist input trajectories $\upsilon\in\mathcal{U}$ and $\omega\in\mathcal{W}$
satisfying:
\begin{IEEEeqnarray}{c}\label{e:ode}\Sigma:\left\{
  \begin{IEEEeqnarraybox}[\relax][c]{rCl}
    \dot\xi(t)&=&f(\xi(t),\upsilon(t),\omega(t)),\\
    \zeta_1(t)&=&h_1(\xi(t)),\\
    \zeta_2(t)&=&h_2(\xi(t)),
  \end{IEEEeqnarraybox}\right.
\end{IEEEeqnarray}
for almost all $t\in$ $]a,b[$. We call the tuple $(\xi,\zeta_1,\zeta_2,\upsilon,\omega)$ a \emph{trajectory}
of $\Sigma$, consisting of a state trajectory $\xi$, output trajectories $\zeta_1$ and $\zeta_2$, and input trajectories
$\upsilon$ and $\omega$, that satisfies \eqref{e:ode}. We also denote by $\xi_{x\upsilon\omega}(t)$ the state reached at time $t$
under the inputs $\upsilon\in\mathcal{U},\omega\in\mathcal{W}$ from the initial condition $x=\xi_{x\upsilon\omega}(0)$; the state $\xi_{x\upsilon\omega}(t)$ is
uniquely determined due to the assumptions on $f$ \cite{sontag1}. We also denote by $\zeta_{1_{x\upsilon\omega}}(t)$ and $\zeta_{2_{x\upsilon\omega}}(t)$ the corresponding external and internal output value of $\xi_{x\upsilon\omega}(t)$, respectively, i.e. $\zeta_{1_{x\upsilon\omega}}(t)=h_1(\xi_{x\upsilon\omega}(t))$ and $\zeta_{2_{x\upsilon\omega}}(t)=h_2(\xi_{x\upsilon\omega}(t))$.

We call $\zeta_1$ an \emph{external output
trajectory}, $\zeta_2$ an \emph{internal output trajectory}, $\upsilon$ an \emph{external input
trajectory}, and $\omega$ an \emph{internal input trajectory} mainly because $\zeta_2$ and $\omega$ are used only for the interconnection purposes and $\zeta_1$ and $\upsilon$ remain available after any interconnection; see Definition \ref{d:ics} later for more detailed information.

\begin{remark}
If the control system $\Sigma$ does not have internal inputs and outputs, the definition of control systems in Definition \ref{Def_control_sys} reduces to tuple $\Sigma=(\mathbb{R}^{n},\R^m,\mathcal{U},f,\R^{q},h)$ and the map $f$ becomes $f:\mathbb{R}^{n}\times\R^m\rightarrow\mathbb{R}^{n}$. Correspondingly, equation \eqref{e:ode} describing the evolution of system trajectories reduces to:
\begin{IEEEeqnarray*}{c}\Sigma:\left\{
  \begin{IEEEeqnarraybox}[\relax][c]{rCl}
    \dot\xi(t)&=&f(\xi(t),\upsilon(t)),\\
    \zeta(t)&=&h(\xi(t)).  \end{IEEEeqnarraybox}\right.
\end{IEEEeqnarray*}
\end{remark}

\section{Storage and Simulation Functions}
First, we introduce a notion of so-called storage functions, adapted from the notion of storage functions from dissipativity theory \cite{willems2,murat16}. While the notion of storage functions in \cite{willems2,murat16} characterizes the correlation of inputs and outputs of a single control system, the proposed notion of storage functions here characterizes the joint correlation of inputs and outputs of two different control systems. In the case that two control systems are the same and have only internal inputs and outputs, our notion of storage functions recovers the one of incremental storage functions introduced in \cite{stan}.

\begin{definition}\label{d:stf} Let
$\Sigma=(\R^n,\R^m,\R^p,\mathcal{U},\mathcal{W},f,\R^{q_1},\R^{q_2},h_1,h_2)$ and
$\hat\Sigma=(\R^{\hat n},\R^{\hat m},\R^{\hat p},\hat{\mathcal{U}},\hat{\mathcal{W}},\hat f,\R^{q_1},\R^{\hat q_2},\hat h_1,\hat h_2)$ be two control systems with the same external output space
dimension. A continuously differentiable 
function $V:\R^{n}\times \R^{\hat n}\to\R_{\ge0}$ is
called a \emph{storage function} from $\hat\Sigma$ to $\Sigma$ if there exist
$\alpha,\eta\in\mathcal{K}_\infty$, $\rho_{\mathrm{ext}}\in\mathcal{K}_\infty\cup\{0\}$, some matrices $W,\hat W,H$ of appropriate dimensions, and some symmetric matrix $X$ of appropriate dimension with conformal block partitions $X^{ij}$, $i,j\in[1;2]$, where $X^{22}\preceq0$, such that
for any $x\in\R^n$ and $\hat x\in\R^{\hat n}$ one has
\begin{align}\label{e:sf:1}
\alpha(\Vert h_1(x)-\hat h_1(\hat x)\Vert)&\le V(x,\hat x),
\end{align}
and $\forall \hat u\in\R^{\hat m}$ $\exists u\in\R^m$ such that $\forall\hat w\in\R^{\hat p}$ $\forall w\in\R^p$ one obtains
\begin{align}\label{inequality1}
	&\nabla V\left(x, \hat{x}\right)^T\begin{bmatrix}
f(x,u,w)\\
\hat f(\hat x,\hat u,\hat w)
\end{bmatrix} \leq-\eta (V\left(x,\hat{x}\right))+\rho_{\mathrm{ext}}(\Vert\hat
u\Vert)+\begin{bmatrix}
Ww-\hat W\hat w\\
h_2(x)-H\hat h_2(\hat x)
\end{bmatrix}^T\overbrace{\begin{bmatrix}
X^{11}&X^{12}\\
X^{21}&X^{22}
\end{bmatrix}}^{X:=}\begin{bmatrix}
Ww-\hat W\hat w\\
h_2(x)-H\hat h_2(\hat x)
\end{bmatrix}.
\end{align} 
\end{definition}

We use notation $\hat\Sigma\preceq\Sigma$ if
there exists a storage function $V$ from $\hat\Sigma$ to $\Sigma$. Control system $\hat\Sigma$ (possibly with $\hat n<n$) is called an abstraction of $\Sigma$. There are several key differences between the notion of storage function here and the corresponding one of simulation function in \cite[Definition 2]{majid20}. Definition 2 in \cite{majid20} requires internal signals $w,\hat w$ and $h_2(x),\hat h_2(\hat x)$ to live in the same spaces, respectively, which is not necessarily the case here. Moreover, the choice of input $u$ here satisfying \eqref{inequality1} only depends on $x$, $\hat x$, and $\hat u$, whereas in \cite[Definition 2]{majid20} it also depends on internal input $\hat w$. Finally, we should point out that if in \cite[Definition 2]{majid20} $\mu(s):=s^TPs$, for any $s\in\R^p_{\geq0}$ and some positive definite matrix $P$, then the simulation function in \cite[Definition 2]{majid20} is also a storage function as in Definition \ref{d:stf} with $W=\hat W=I_{p}$, $X^{11}=P$, and the rest of conformal block partitions of $X$ are zero.

Now, we recall the notion of simulation functions introduced 
in~\cite{GP09} with some modifications.

\begin{definition}\label{d:sf} Let
$\Sigma=(\R^n,\R^m,\mathcal{U},f,\R^{q},h)$ and
$\hat\Sigma=(\R^{\hat n},\R^{\hat m},\hat{\mathcal{U}},\hat f,\R^{q},\hat h)$ be
two control systems. A continuously differentiable 
function $V:\R^{n}\times \R^{\hat n}\to\R_{\ge0}$ is
called a \emph{simulation function} from $\hat\Sigma$ to $\Sigma$ if there exist
$\alpha,\eta\in\mathcal{K}_\infty$ and $\rho_{\mathrm{ext}}\in\mathcal{K}_\infty\cup\{0\}$ such that
for any $x\in\R^n$ and $\hat x\in\R^{\hat n}$ one has
\begin{align}\label{e:sf:10}
\alpha(\Vert h(x)-\hat h(\hat x)\Vert)&\le V(x,\hat x),
\end{align}
and $\forall \hat u\in\R^{\hat m}$ $\exists u\in\R^m$ such that
\begin{align}\label{inequality10} 
	\nabla V\left(x, \hat{x}\right)^T\begin{bmatrix}
f(x,u)\\
\hat f(\hat x,\hat u)
\end{bmatrix} \leq-\eta (V\left(x,\hat{x}\right))+\rho_{\mathrm{ext}}(\Vert\hat
u\Vert).
\end{align} 
\end{definition}

We use notation $\hat\Sigma\preceq_{\mathcal{S}}\Sigma$ if there exists a simulation function $V$ from $\hat\Sigma$ to
$\Sigma$. 


Let us point out the differences between Definition \ref{d:sf} here and \cite[Definition 1]{GP09}. Here, for the sake of brevity, we simply assume that for every $x$,
$\hat x$, $\hat u$, there exists $u$ so that \eqref{inequality10} holds. Whereas
in \cite[Definition 1]{GP09} the authors use an \emph{interface function} $k:\R^n\times\R^{\hat n}\times\R^{\hat m}\to\R^m$ to feed
the input $u = k(x,\hat x,\hat u)$ enforcing \eqref{inequality10}. Function $\alpha$ in \cite[Definition 1]{GP09} is assumed to be the identity. Furthermore, we frame the decay condition \eqref{inequality10} in so-called ``dissipative" form, while in \cite[Definition 1]{GP09} the decay condition is given in so-called ``implication" form.

Note that the notions of storage functions in Definition \ref{d:stf} and simulation functions in Definition \ref{d:sf} are not comparable in general. The former is defined for control systems with internal inputs and outputs while the latter is defined only for control systems without internal inputs and outputs. One can readily verify that both notions coincide for control systems without internal inputs and outputs.

The next theorem shows the importance of the existence of a simulation function by quantifying the error between the output behaviours of $\Sigma$ and the ones of its abstraction $\hat \Sigma$.
\begin{theorem}\label{theorem1}
Let 
$\Sigma=(\R^n,\R^m,\mathcal{U},f,\R^{q},h)$ and
$\hat\Sigma=(\R^{\hat n},\R^{\hat m},\hat{\mathcal{U}},\hat f,\R^{q},\hat h)$.
Suppose $V$ is a simulation function from $\hat\Sigma$ to $\Sigma$. Then, there
exist a $\mathcal{KL}$ function $\vartheta$ such that 
for any
$\hat\upsilon\in\hat{\mathcal{U}}$, $x\in\R^n$, and $\hat x\in\R^{\hat n}$, there exists $\upsilon\in{\mathcal{U}}$ such that the following inequality holds for any $t\in\R_{\ge0}$:
\begin{align}
\label{inequality0} 
	\Vert{\zeta}_{x\upsilon}(t) -\hat\zeta_{\hat x \hat \upsilon}(t)\Vert \le& \alpha^{-1}(2\vartheta\left(V(x,\hat x),t\right ) )+\alpha^{-1}(2\eta^{-1}(2\rho_{\mathrm{ext}}(\Vert\hat \upsilon\Vert_\infty))).
\end{align} 
\end{theorem}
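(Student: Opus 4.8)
The plan is to close the loop between the two systems using the existential clause of the simulation function, track how $V$ evolves along the resulting trajectories, and then convert the decay estimate on $V$ into the claimed output bound through \eqref{e:sf:10}.

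First I would fix $\hat\upsilon\in\hat{\mathcal{U}}$, $x\in\R^n$, and $\hat x\in\R^{\hat n}$, and let $\hat\xi=\hat\xi_{\hat x\hat\upsilon}$ be the trajectory of $\hat\Sigma$. For almost every $t$, I apply the ``$\forall\hat u\ \exists u$'' property of Definition \ref{d:sf} with $\hat u=\hat\upsilon(t)$ and states $\xi(t),\hat\xi(t)$ to pick a value $u$ enforcing \eqref{inequality10}, and set $\upsilon(t):=u$. This defines an input $\upsilon\in\mathcal{U}$ and, with the initial state $x$, the trajectory $\xi=\xi_{x\upsilon}$ of $\Sigma$. (A measurable-selection argument is required to ensure $\upsilon$ is admissible; I would invoke standard results for this.)

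Next, consider the scalar map $t\mapsto V(\xi(t),\hat\xi(t))$. Since $V$ is continuously differentiable and the state trajectories are absolutely continuous, its derivative equals $\nabla V(\xi(t),\hat\xi(t))^T[f(\xi(t),\upsilon(t));\hat f(\hat\xi(t),\hat\upsilon(t))]$ for almost all $t$. By \eqref{inequality10} and monotonicity of $\rho_{\mathrm{ext}}$ (which gives $\rho_{\mathrm{ext}}(\Vert\hat\upsilon(t)\Vert)\le\rho_{\mathrm{ext}}(\Vert\hat\upsilon\Vert_\infty)$), this yields the differential inequality
\[
\frac{d}{dt}V(\xi(t),\hat\xi(t)) \le -\eta\big(V(\xi(t),\hat\xi(t))\big) + \rho_{\mathrm{ext}}(\Vert\hat\upsilon\Vert_\infty).
\]
Writing $c:=\rho_{\mathrm{ext}}(\Vert\hat\upsilon\Vert_\infty)$ and $\mu:=\eta^{-1}(2c)$, I would argue that wherever $V\ge\mu$ one has $\eta(V)\ge2c$ and hence $\dot V\le-\tfrac12\eta(V)$, so a comparison with the autonomous scalar equation $\dot z=-\tfrac12\eta(z)$ yields a $\mathcal{KL}$ function $\vartheta$ with $V(\xi(t),\hat\xi(t))\le\vartheta(V(x,\hat x),t)$ on that regime; the set $\{V\le\mu\}$ is forward invariant since $\dot V<0$ on its boundary. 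Combining the two regimes gives $V(\xi(t),\hat\xi(t))\le\vartheta(V(x,\hat x),t)+\eta^{-1}(2c)$ for all $t\ge0$.

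Finally, \eqref{e:sf:10} gives $\Vert\zeta_{x\upsilon}(t)-\hat\zeta_{\hat x\hat\upsilon}(t)\Vert\le\alpha^{-1}(V(\xi(t),\hat\xi(t)))$. Applying the increasing map $\alpha^{-1}$ to the bound on $V$ and using the elementary inequality $\alpha^{-1}(a+b)\le\alpha^{-1}(2a)+\alpha^{-1}(2b)$ (valid since $a+b\le2\max\{a,b\}$) produces exactly \eqref{inequality0}. The main obstacle I anticipate is the construction of $\vartheta$: obtaining a genuine $\mathcal{KL}$ estimate rather than mere boundedness relies on the comparison-lemma machinery for $\dot z=-\tfrac12\eta(z)$, which I would either cite from the ISS literature or derive from the case analysis sketched above; the measurable selection of $\upsilon$ is a secondary technical point.
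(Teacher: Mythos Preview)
Your proposal is correct and follows the standard ISS comparison-lemma route; the paper itself omits the proof entirely, stating only that it ``is similar to the one of Theorem 3.5 in \cite{majid19} and is omitted due to lack of space,'' and your argument is precisely the expected one underlying that reference. The one cosmetic point is that the paper (via the remark following the theorem and the cited \cite{majid19}) treats the existence of $\vartheta$ as a direct consequence of the comparison lemma in \cite{LSW96} (Lemma 4.4), so you can simply cite that rather than rederive the two-regime analysis.
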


The proof of Theorem \ref{theorem1} is similar to the one of Theorem 3.5 in \cite{majid19} and is omitted due to lack of space.


Let us illustrate the importance of the existence of a simulation function, correspondingly inequality \eqref{inequality0}, on a simple example. Assume we are given a control system $\Sigma=(\R^n,\R^m,\mathcal{U},f,\R^{q},h)$ and interested in computing a control input $\upsilon$ to keep the output $\zeta_{x\upsilon}$ always inside a safe set $\mathsf{D}\subset\R^q$. Instead, one can compute a control input $\hat\upsilon$ for the abstraction $\hat\Sigma$ keeping the output $\hat\zeta_{\hat x\hat\upsilon}$ always inside $\mathsf{D}$ which is potentially easier due to a lower dimension of $\hat\Sigma$. The existence of a simulation function from $\hat\Sigma$ to $\Sigma$ and, hence, the inequality \eqref{inequality0} imply that there exists control input $\upsilon$ such that $\zeta_{x\upsilon}$ is always inside $\mathsf{D}^{\varepsilon}$, where $\varepsilon=\alpha^{-1}(2\vartheta\left(V(x,\hat x),0\right ) )+\alpha^{-1}(2\eta^{-1}(2\rho_{\mathrm{ext}}(\Vert\hat \upsilon\Vert_\infty)))$ and $\mathsf{D}^{\varepsilon}=\{y\in\R^p\,\,|\,\,\inf_{y'\in\mathsf{D}}\Vert y-y'\Vert\leq\varepsilon\}$. Note that one can choose initial conditions $x\in\R^n$ and $\hat x\in\R^{\hat n}$ to minimize the first term in $\varepsilon$ and, hence, to have a smaller error in the satisfaction of the desired property. 

\begin{remark}
Note that if $\alpha^{-1}$ and $\eta^{-1}$ satisfy the triangle inequality (i.e., $\alpha^{-1}(a+b)\leq\alpha^{-1}(a)+\alpha^{-1}(b)$ and $\eta^{-1}(a+b)\leq\eta^{-1}(a)+\eta^{-1}(b)$ for all $ a,b \in \R_{\geq0}$), one can divide all the coefficients $2$, appearing in the right hand side of \eqref{inequality0}, by factor $2$ to get a less conservative upper bound.
\end{remark}

\begin{remark}
Note that if one is given an interface function $k:\R^n\times\R^{\hat n}\times\R^{\hat m}\to\R^{m}$ that maps every $x$, $\hat x$, $\hat u$ to an input $u=k(x,\hat x,\hat u)$ such that \eqref{inequality10} is satisfied (similar to \cite[Definition 1]{GP09}), then input $\upsilon$ realizing \eqref{inequality0} is readily given by $\upsilon=k(\xi,\hat \xi,\hat\upsilon)$. In Section \ref{s:lin} we show how the map $k$ can be constructed for a class of nonlinear control systems.
\end{remark}

\section{Compositionality Result}
\label{s:inter}
In this section, we analyze networks of control systems and show
how to construct their abstractions together with the corresponding simulation functions by using storage functions for the subsystems. The
definition of the network of control systems is based on the notion of
interconnected systems described in~\cite{murat16}. 

\subsection{Interconnected control systems}

Here, we define the \emph{interconnected control system} as the following.

\begin{definition}\label{d:ics}
Consider $N\in\N_{\geq1}$ control subsystems $
\Sigma_i=\left(\R^{n_i},\R^{m_i},\R^{p_i},\mathcal{U}_i,\mathcal{W}_i,f_i,\R^{q_{1i}},\R^{q_{2i}},h_{1i},h_{2i}\right)$,
$i\in\intcc{1;N}$, and a static matrix $M$ of an appropriate dimension defining the coupling of these subsystems. The \emph{interconnected control
system} $\Sigma=\left(\R^{n},\R^{m},\mathcal{U},f,\R^{q},h\right)$, denoted by
$\mathcal{I}(\Sigma_1,\ldots,\Sigma_N)$, follows by $n=\sum_{i=1}^Nn_i$,
$m=\sum_{i=1}^Nm_{i}$, $q=\sum_{i=1}^Nq_{1i}$, and functions
\begin{align*}
f(x,u)&\Let\intcc{f_1(x_1,u_1,w_1);\ldots;f_N(x_N,u_N,w_N)},\\
h(x)&\Let \intcc{h_{11}(x_1);\ldots;h_{1N}(x_N)},
\end{align*}
where $u=\intcc{u_{1};\ldots;u_{N}}$, $x=\intcc{x_{1};\ldots;x_{N}}$ and
with the internal variables constrained by
$$\intcc{w_{1};\ldots;w_{N}}=M\intcc{h_{21}(x_1);\ldots;h_{2N}(x_N)}.$$
\end{definition}

An interconnection of $N$ control subsystems $\Sigma_i$ is illustrated schematically in Figure \ref{system1}.
\begin{figure}[ht]
\begin{center}
\includegraphics[width=5cm]{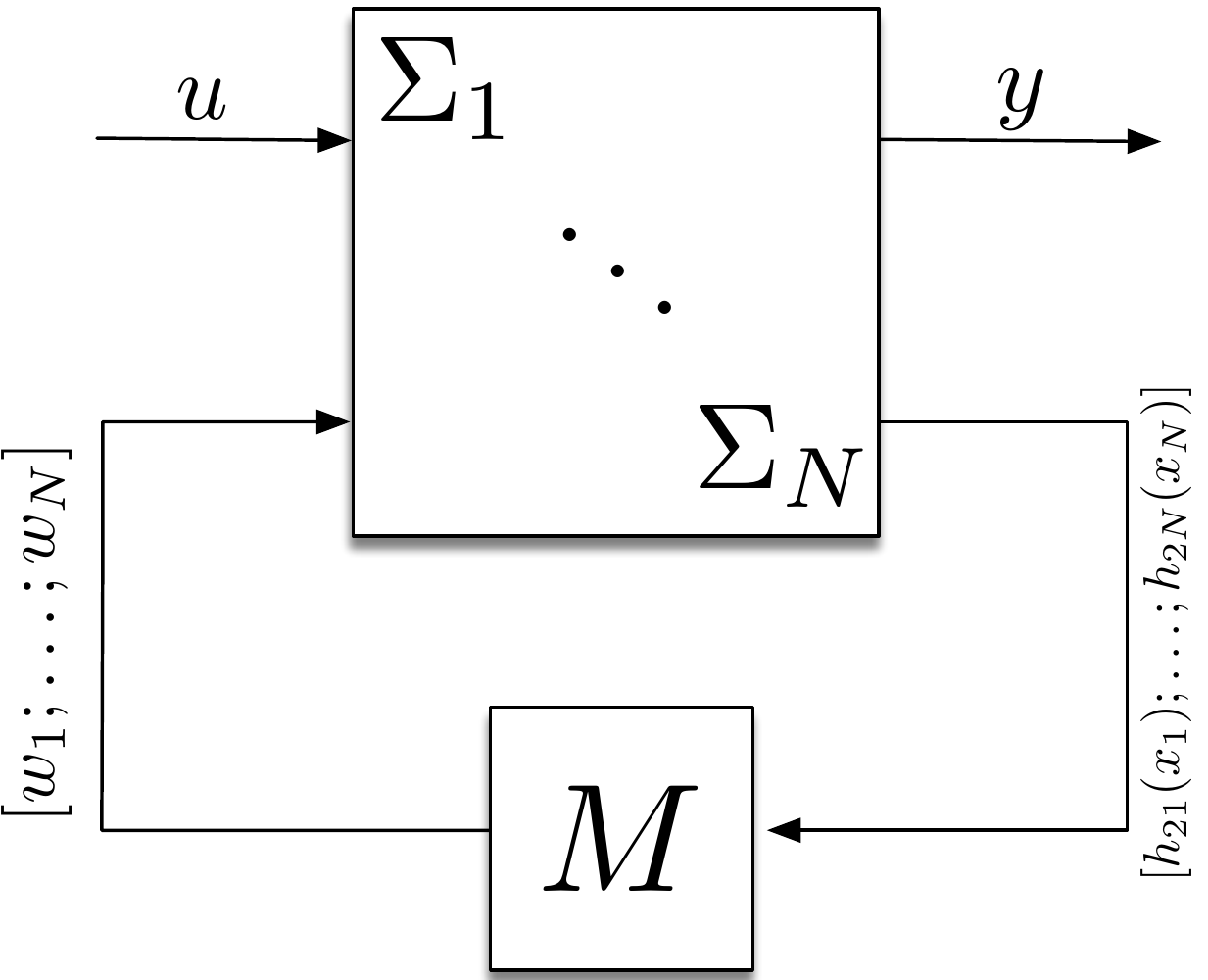}
\caption{An interconnection of $N$ control subsystems $\Sigma_1,\ldots,\Sigma_N$.}
\label{system1}
\end{center}
\end{figure}

\subsection{Composing simulation functions from storage functions}
We assume that we are given $N$ control subsystems
$\Sigma_i=\left(\R^{n_i},\R^{m_i},\R^{p_i},\mathcal{U}_i,\mathcal{W}_i,f_i,\R^{q_{1i}},R^{q_{2i}},h_{1i},h_{2i}\right),$
together with their corresponding abstractions $\hat\Sigma_i=(\R^{\hat n_i},\R^{\hat
m_i},\R^{\hat p_i},\hat{\mathcal{U}}_i,\hat{\mathcal{W}}_i,\hat f_i,\R^{q_{1i}},\R^{\hat q_{2i}},\hat
h_{1i},\hat h_{2i})$ and with storage functions $V_i$ from
$\hat\Sigma_i$ to $\Sigma_i$. We use $\alpha_{i}$, $\eta_i$, $\rho_{i\mathrm{ext}}$, $H_i$, $W_i$, $\hat W_i$, $X_i$, $X_i^{11}$, $X_i^{12}$, $X_i^{21}$, and $X_i^{22}$ to denote the corresponding functions, matrices, and their corresponding conformal block partitions appearing in Definition \ref{d:stf}.

The next theorem, one of the main results of the paper, provides a compositional approach on the construction of abstractions of networks of control systems and that of the corresponding simulation functions.

\begin{theorem}\label{t:ic}
Consider the interconnected control system
$\Sigma=\mathcal{I}(\Sigma_1,\ldots,\Sigma_N)$ induced by $N\in\N_{\geq1}$ control subsystems~$\Sigma_i$ and the coupling matrix $M$.
Suppose each control subsystem $\Sigma_i$ admits an abstraction $\hat \Sigma_i$ with the corresponding storage function $V_i$. If there exist $\mu_{i}\geq0$, $i\in[1;N]$, and matrix $\hat M$ of appropriate dimension such that the matrix (in)equality
\begin{align}\label{e:SGcondition}
\begin{bmatrix}
WM\\
I_{\tilde q}
\end{bmatrix}^T X(\mu_1X_1,\ldots,\mu_NX_N)\begin{bmatrix}
WM\\
I_{\tilde q}
\end{bmatrix}&\preceq0,\\\label{e:SGcondition1}
WMH&=\hat W\hat M,
\end{align}
are satisfied, where $\tilde q=\sum_{i=1}^Nq_{2i}$ and
\begin{align}\label{matrix1}
&W:=\mathsf{diag}(W_1,\ldots,W_N),~\hat W:=\mathsf{diag}(\hat W_1,\ldots,\hat W_N),~H:=\mathsf{diag}(H_1,\ldots,H_N),\\\label{matrix}&X(\mu_1X_1,\ldots,\mu_NX_N):=\begin{bmatrix}
\mu_1X_1^{11}&&&\mu_1X_1^{12}&&\\
&\ddots&&&\ddots&\\
&&\mu_NX_N^{11}&&&\mu_NX_N^{12}\\
\mu_1X_1^{21}&&&\mu_1X_1^{22}&&\\
&\ddots&&&\ddots&\\
&&\mu_NX_N^{21}&&&\mu_NX_N^{22}
\end{bmatrix},
\end{align}
then 
\begin{IEEEeqnarray*}{c}
V(x,\hat x)\Let\sum_{i=1}^N\mu_iV_i(x_i,\hat x_i)
\end{IEEEeqnarray*}
is a simulation
function from the interconnected control system $\hat \Sigma=\mathcal{I}(\hat
\Sigma_1,\ldots,\hat\Sigma_N)$, with the coupling matrix $\hat M$, to $\Sigma$. 
\end{theorem}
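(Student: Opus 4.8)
The plan is to check that $V=\sum_{i=1}^N\mu_iV_i$ fulfils the two defining properties of a simulation function (Definition \ref{d:sf}) for the pair $\Sigma=\mathcal{I}(\Sigma_1,\ldots,\Sigma_N)$, $\hat\Sigma=\mathcal{I}(\hat\Sigma_1,\ldots,\hat\Sigma_N)$. For the lower bound \eqref{e:sf:10}, I would start from the subsystem estimates $\alpha_i(\Vert h_{1i}(x_i)-\hat h_{1i}(\hat x_i)\Vert)\le V_i(x_i,\hat x_i)$ and from the fact that, by Definition \ref{d:ics}, $h(x)-\hat h(\hat x)$ is the concatenation of the vectors $h_{1i}(x_i)-\hat h_{1i}(\hat x_i)$, whence $\Vert h(x)-\hat h(\hat x)\Vert\le\sqrt{N}\,\max_i\Vert h_{1i}(x_i)-\hat h_{1i}(\hat x_i)\Vert$. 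Retaining in $V$ only the (nonnegative) term attached to a maximizing index then gives $\alpha(\Vert h(x)-\hat h(\hat x)\Vert)\le V(x,\hat x)$ with $\alpha(s)\Let\min_{i:\,\mu_i>0}\mu_i\alpha_i(s/\sqrt{N})\in\mathcal{K}_\infty$; this is the standard construction and is where one relies on the weights of the relevant blocks being positive.

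For the dissipation inequality \eqref{inequality10}, I would fix an external input $\hat u=[\hat u_1;\ldots;\hat u_N]$ of $\hat\Sigma$ and, for each $i$, use the storage-function property of $V_i$ to pick $u_i$ (a function of $x_i,\hat x_i,\hat u_i$ only) so that \eqref{inequality1} holds, setting $u\Let[u_1;\ldots;u_N]$. The quantifier order in Definition \ref{d:stf} is essential here: $u_i$ is chosen before the interconnection fixes the internal signals, so \eqref{inequality1} remains valid once $w_i,\hat w_i$ are determined by $w=Mh_2(x)$ and $\hat w=\hat M\hat h_2(\hat x)$. Since each $V_i$ depends only on $(x_i,\hat x_i)$ and $f,\hat f$ split blockwise, the chain rule and the weighted sum of \eqref{inequality1} yield
\begin{align*}
\nabla V(x,\hat x)^T\begin{bmatrix}f(x,u)\\\hat f(\hat x,\hat u)\end{bmatrix}\le-\sum_{i=1}^N\mu_i\eta_i(V_i)+\sum_{i=1}^N\mu_i\rho_{i\mathrm{ext}}(\Vert\hat u_i\Vert)+q^TX(\mu_1X_1,\ldots,\mu_NX_N)\,q,
\end{align*}
where $q\Let[\,Ww-\hat W\hat w;\,h_2(x)-H\hat h_2(\hat x)\,]$ and $W,\hat W,H$ are as in \eqref{matrix1}; the identification of the weighted sum of per-subsystem supply rates with the single quadratic form $q^TX(\mu_1X_1,\ldots,\mu_NX_N)q$ uses exactly that the block matrix \eqref{matrix} interleaves the $\mu_iX_i^{kl}$ without ever coupling two distinct subsystems.

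The hard part is to show that $q^TX(\mu_1X_1,\ldots,\mu_NX_N)q\le0$. Substituting the interconnection constraints gives $Ww-\hat W\hat w=WMh_2(x)-\hat W\hat M\hat h_2(\hat x)$, and here I would invoke \eqref{e:SGcondition1}, $\hat W\hat M=WMH$, to rewrite this as $WM\big(h_2(x)-H\hat h_2(\hat x)\big)$. Introducing $r\Let h_2(x)-H\hat h_2(\hat x)\in\R^{\tilde q}$, the whole stacked vector collapses to $q=\begin{bmatrix}WM\\I_{\tilde q}\end{bmatrix}r$, so that
\begin{align*}
q^TX(\mu_1X_1,\ldots,\mu_NX_N)\,q=r^T\begin{bmatrix}WM\\I_{\tilde q}\end{bmatrix}^TX(\mu_1X_1,\ldots,\mu_NX_N)\begin{bmatrix}WM\\I_{\tilde q}\end{bmatrix}r\le0
\end{align*}
for every $r$, by \eqref{e:SGcondition}. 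This algebraic reduction of the $N$ decoupled supply rates to the single matrix inequality \eqref{e:SGcondition} is the crux of the argument.

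It then remains to absorb the first two sums into the form $-\eta(V)+\rho_{\mathrm{ext}}(\Vert\hat u\Vert)$. For the input term I would set $\rho_{\mathrm{ext}}(s)\Let\sum_{i=1}^N\mu_i\rho_{i\mathrm{ext}}(s)\in\mathcal{K}_\infty\cup\{0\}$ and use $\Vert\hat u_i\Vert\le\Vert\hat u\Vert$. For the decay term, all summands being nonnegative gives $\sum_i\mu_i\eta_i(V_i)\ge\mu_{i^\ast}\eta_{i^\ast}(V_{i^\ast})$ for an index $i^\ast$ maximizing $\mu_iV_i$; since $\mu_{i^\ast}V_{i^\ast}\ge V/N$, the choice $\eta(s)\Let\min_{i:\,\mu_i>0}\mu_i\eta_i\big(s/(N\mu_i)\big)\in\mathcal{K}_\infty$ yields $\eta(V)\le\sum_i\mu_i\eta_i(V_i)$. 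Combining the three estimates establishes \eqref{inequality10}, so $V$ is a simulation function from $\hat\Sigma$ to $\Sigma$. I expect the $\mathcal{K}_\infty$ comparison functions to be routine and the matrix bookkeeping around \eqref{e:SGcondition}--\eqref{e:SGcondition1} to be the only genuinely delicate step.
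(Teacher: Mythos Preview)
Your argument is correct and follows the same route as the paper: sum the weighted storage inequalities, rewrite the stacked supply vector as $\begin{bmatrix}WM\\I_{\tilde q}\end{bmatrix}r$ via \eqref{e:SGcondition1}, and kill the quadratic term with \eqref{e:SGcondition}. The only difference is in the bookkeeping for the comparison functions: the paper defines $\alpha$, $\eta$, and $\rho_{\mathrm{ext}}$ through constrained optimisation problems (e.g.\ $\eta(s)=\min\{\sum_i\mu_i\eta_i(s_i):\mu^T\vec s=s\}$), which gives the tightest $\mathcal{K}_\infty$ witnesses, whereas you use the cruder but more explicit max-index constructions. Both are standard and valid; your versions are easier to write down, the paper's give sharper quantitative bounds in Theorem~\ref{theorem1}. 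Your remark that the argument needs the relevant $\mu_i$ to be strictly positive is well taken---the paper's optimisation-based $\bar\alpha$ silently suffers the same defect (if some $\mu_i=0$ the max is unbounded), so you have not lost anything relative to the original.
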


\begin{proof}
First we show that inequality \eqref{e:sf:10} holds for some
$\mathcal{K}_\infty$ function $\alpha$. For any
$x=\intcc{x_1;\ldots;x_N}\in\R^{n}$ and 
$\hat x=\intcc{\hat x_1;\ldots;\hat x_N}\in\R^{\hat n}$, one gets:
\begin{align*}
\Vert h(x)-\hat h(\hat x) \Vert&=\Vert [h_{11}(x_1);\ldots;h_{1N}(x_N)]-[\hat h_{11}(\hat x_1);\ldots;\hat h_{1N}(\hat x_N)] \Vert\\\notag
&\le\sum_{i=1}^N \Vert  h_{1i}(\hat x_i)-\hat h_{1i}(x_i) \Vert
\le \sum_{i=1}^N \alpha_{i}^{-1}(V_i( x_i, \hat x_i))\le \ol\alpha(V(x,\hat x)),
\end{align*}where
$\overline\alpha$ is a $\mathcal{K}_\infty$ function defined as 
\begin{align}\notag
	\ol\alpha(s) &\Let \left \{ \begin{array}{cc} \max\limits_{\as {\ge 0}} & \sum_{i=1}^N \alpha_{i}^{-1}(s_i) \\ \st & \mu^T \as=s, \end{array} \right. 
\end{align}
where $\as=\intcc{s_1;\ldots;s_N}\in\R^N$ and $\mu=\intcc{\mu_1;\ldots;\mu_N}$. By defining the
$\mathcal{K}_\infty$ function $\alpha(s)=\overline\alpha^{-1}(s)$, $\forall s\in\R_{\ge0}$, one obtains
$$\alpha(\Vert h(x)-\hat h(\hat x)\Vert)\le V( x, \hat x),$$satisfying inequality \eqref{e:sf:10}.
Now we show that inequality \eqref{inequality10} holds as well.
Consider any 
$x=\intcc{x_1;\ldots;x_N}\in\R^{n}$,
$\hat x=\intcc{\hat x_1;\ldots;\hat x_N}\in\R^{\hat n}$, and
$\hat u=\intcc{\hat u_{1};\ldots;\hat u_{N}}\in\R^{\hat m}$. For any $i\in[1;N]$, there exists $u_i\in\R^{m_i}$, consequently, a vector $u=\intcc{u_{1};\ldots;u_{N}}\in\R^{m}$, satisfying~\eqref{inequality1} for each pair of subsystems $\Sigma_i$ and $\hat\Sigma_i$
with the internal inputs given by $\intcc{w_1;\ldots;w_N}=M[h_{21}(x_1);\ldots;h_{2N}(x_N)]$ and $\intcc{\hat w_1;\ldots;\hat w_N}=\hat M[\hat h_{21}(\hat x_1);\ldots;\hat h_{2N}(\hat x_N)]$.
We derive the following inequality  
\begin{align}\label{rewrite}
&\dot V\left(x, \hat{x}\right)=\sum_{i=1}^N\mu_i\dot V_i\left( x_i,
\hat{x}_i\right)\\\notag&\leq\sum_{i=1}^N\mu_i\bigg(-\eta_i(V_i( x_i,\hat x_i))+\rho_{i\mathrm{ext}}(\Vert \hat u_i\Vert)+\begin{bmatrix}
W_iw_i-\hat W_i\hat w_i\\
h_{2i}(x_i)-H_i\hat h_{2i}(\hat x_i)
\end{bmatrix}^T\begin{bmatrix}
X_i^{11}&X_i^{12}\\
X_i^{21}&X_i^{22}
\end{bmatrix}\begin{bmatrix}
W_iw_i-\hat W_i\hat w_i\\
h_{2i}(x_i)-H_i\hat h_{2i}(\hat x_i)
\end{bmatrix}\bigg).
\end{align}
Using conditions \eqref{e:SGcondition} and \eqref{e:SGcondition1} and the definition of matrices $W$, $\hat W$, $H$, and $X$ in \eqref{matrix1} and \eqref{matrix}, the inequality \eqref{rewrite} can be rewritten as
\begin{align}\notag
\dot V\left(x, \hat{x}\right)
\leq&\sum_{i=1}^N-\mu_i\eta_i(V_i( x_i,\hat x_i))+\sum_{i=1}^N\mu_i\rho_{i\mathrm{ext}}(\Vert \hat u_i\Vert)\\\notag&+\begin{bmatrix}
W\begin{bmatrix}
w_1\\
\vdots\\
w_N
\end{bmatrix}-\hat W\begin{bmatrix}
\hat w_1\\
\vdots\\
\hat w_N
\end{bmatrix}\\
h_{21}(x_1)-H_1\hat h_{21}(\hat x_1)\\
\vdots\\
h_{2N}(x_N)-H_N\hat h_{2N}(\hat x_N)
\end{bmatrix}^TX(\mu_1X_1,\ldots,\mu_NX_N)\begin{bmatrix}
W\begin{bmatrix}
w_1\\
\vdots\\
w_N
\end{bmatrix}-\hat W\begin{bmatrix}
\hat w_1\\
\vdots\\
\hat w_N
\end{bmatrix}\\
h_{21}(x_1)-H_1\hat h_{21}(\hat x_1)\\
\vdots\\
h_{2N}(x_N)-H_N\hat h_{2N}(\hat x_N)
\end{bmatrix}\\\notag
\leq&\sum_{i=1}^N-\mu_i\eta_i(V_i( x_i,\hat x_i))+\sum_{i=1}^N\mu_i\rho_{i\mathrm{ext}}(\Vert \hat u_i\Vert)\\\notag&+\begin{bmatrix}
h_{21}(x_1)-H_1\hat h_{21}(\hat x_1)\\
\vdots\\
h_{2N}(x_N)-H_N\hat h_{2N}(\hat x_N)
\end{bmatrix}^T\begin{bmatrix}
WM\\
I_{\tilde q}
\end{bmatrix}^TX(\mu_1X_1,\ldots,\mu_NX_N)\begin{bmatrix}
WM\\
I_{\tilde q}
\end{bmatrix}\begin{bmatrix}
h_{21}(x_1)-H_1\hat h_{21}(\hat x_1)\\
\vdots\\
h_{2N}(x_N)-H_N\hat h_{2N}(\hat x_N)
\end{bmatrix}\\\notag
\leq&\sum_{i=1}^N-\mu_i\eta_i(V_i( x_i,\hat x_i))+\sum_{i=1}^N\mu_i\rho_{i\mathrm{ext}}(\Vert \hat u_i\Vert).
\end{align}

Define the functions 
\begin{subequations}
\begin{align}
	\label{lambda}
	\eta(s) &\Let \left \{ \begin{array}{cc} \min\limits_{\as  {\ge 0}} & \sum_{i=1}^N\mu_i\eta_i(s_i) \\ \st & \mu^T \as=s, \end{array} \right. \\
	\label{rho}
	\rho_{\mathrm{ext}}(s) &\Let \left \{ \begin{array}{cc} \max\limits_{\as {\ge 0}} & \sum_{i=1}^N\mu_i\rho_{i\mathrm{ext}}(s_i) \\ \st & \|\as \| = s, \end{array} \right. 
\end{align}
\end{subequations}
where $\eta\in\mathcal{K}_\infty$ and $\rho_{\mathrm{ext}}\in\mathcal{K}_\infty\cup\{0\}$.
By construction, we readily have
\begin{align}\notag
\dot V\left( x,\hat{x}\right)\leq-\eta\left(V\left( x,\hat{x}\right)\right)+\rho_{\mathrm{ext}}(\left\Vert \hat{u}\right\Vert),
\end{align}
which satisfies inequality \eqref{inequality10}. Hence, we conclude that $V$ is a simulation function from $\hat \Sigma$ to $\Sigma$. 
\end{proof}

Figure \ref{composition1} illustrates schematically the result of
Theorem~\ref{t:ic}. 

\begin{remark}
Let us assume, $\forall i\in[1;N]$, $W_i=I_{p_i}$ and each control subsystem $\Sigma_i$ is single-internal-input single-internal-output. Under these assumptions, analytical feasibility conditions for matrix inequality \eqref{e:SGcondition} can be derived for special interconnection matrices $M$ including negative and positive feedback interconnection, skew symmetric interconnection, negative feedback cyclic interconnection, and finally extension to cactus graphs as provided in details in \cite[Chapter 2]{murat16}.
\end{remark}

\begin{figure}[t]
\begin{center}
\includegraphics[width=8.8cm]{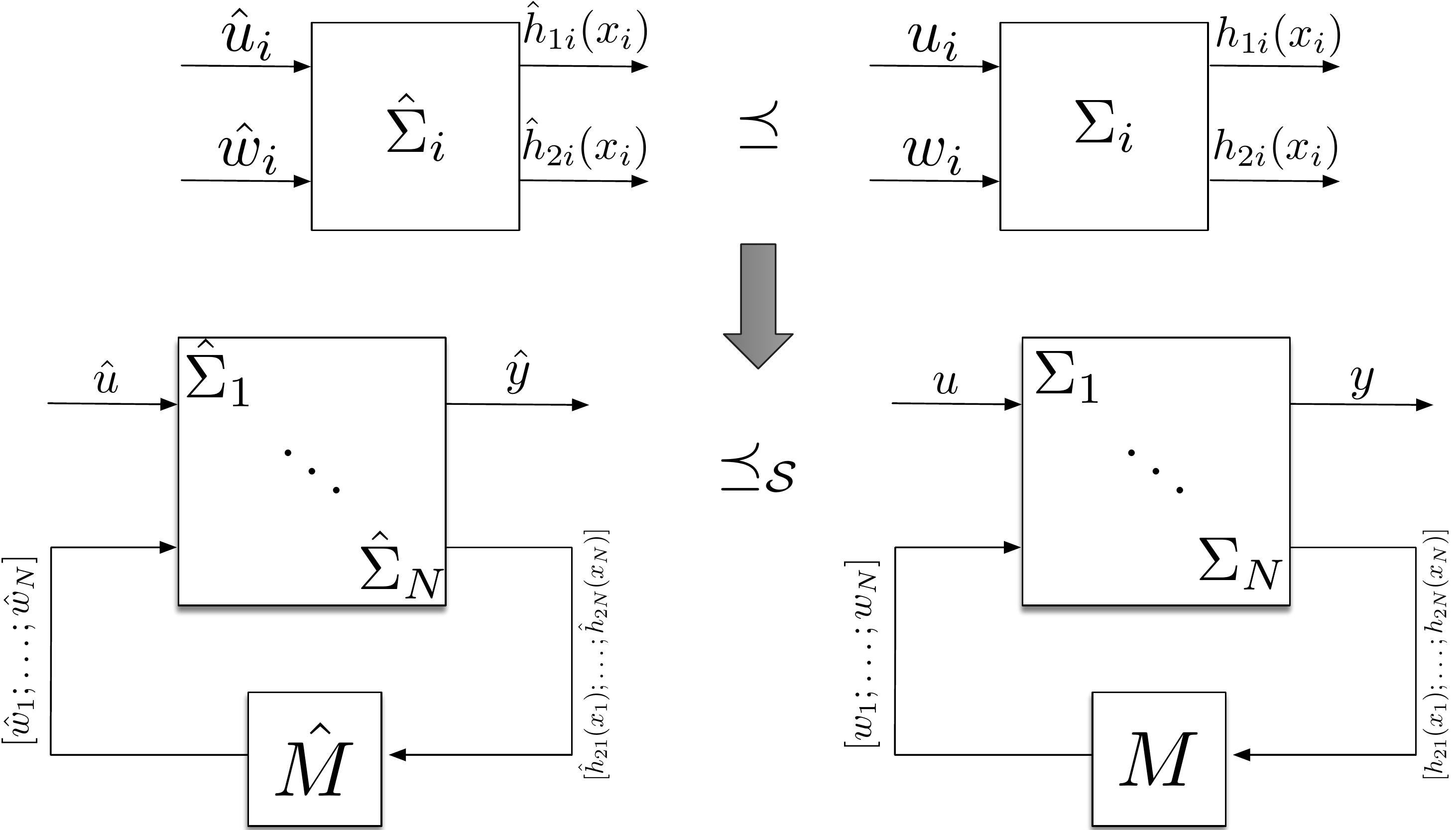}
\caption{Compositionality results provided that conditions \eqref{e:SGcondition} and \eqref{e:SGcondition1} are satisfied.}
\label{composition1}
\end{center}
\end{figure}

\section{Abstraction Synthesis for a Class of Nonlinear Control Systems}
\label{s:lin}

In this section, we concentrate on a specific class of nonlinear control systems $\Sigma$ and
\emph{quadratic} storage functions $V$. In the first part, we formally define the specific class of nonlinear control systems with which we deal in this section.
In the second part, we assume that an abstraction $\hat
\Sigma$ is given and we provide conditions under which $V$ is a storage
function. In the third part it is shown geometrically how to construct the
abstraction $\hat \Sigma$ together with the storage function $V$. Finally, we discuss the feasibility of a key condition based on which the results of this section hold.

\subsection{A class of nonlinear control systems}
The class of nonlinear control systems, considered in this section, is given by
\begin{IEEEeqnarray}{c}\label{e:lin:sys}\Sigma:\left\{
  \begin{IEEEeqnarraybox}[\relax][c]{rCl}
    \dot \xi&=&A\xi+E\varphi(F\xi)+B\upsilon+D\omega,\\
\zeta_1&=&C_1\xi,\\
\zeta_2&=&C_2\xi,
  \end{IEEEeqnarraybox}\right.
\end{IEEEeqnarray}
where $\varphi:\R\rightarrow\R$ satisfies 
\begin{equation}\label{rate}
a\leq\frac{\varphi(v)-\varphi(w)}{v-w}\leq b,~~~\forall v,w\in\R,v\neq w,
\end{equation}
for some $a\in\R$ and $b\in\R_{>0}\cup\{\infty\}$, $a\leq b$, and
\begin{IEEEeqnarray}{c,c,c,c,c,c,c,c,c}\nonumber
A\in\R^{n\times n},&
E\in\R^{n\times 1},&
F\in\R^{1\times n},&
B\in\R^{n\times m},&
D\in\R^{n\times p},&
C_1\in\R^{q_1\times n},&
C_2\in\R^{q_2\times n}.
\end{IEEEeqnarray}

We use the tuple
\begin{IEEEeqnarray*}{c}
\Sigma=(A,B,C_1,C_2,D,E,F,\varphi),
\end{IEEEeqnarray*}
to refer to the class of control systems of
the form~\eqref{e:lin:sys}.

\begin{remark}
If $\varphi$ in \eqref{e:lin:sys} is linear including the zero function (i.e. $\varphi\equiv0$) or $E$ is a zero matrix, one can remove or push the term $E\varphi(F\xi)$ to $A\xi$ and, hence, the tuple representing the class of control systems reduces to the linear one $\Sigma=(A,B,C_1,C_2,D)$. Therefore, every time we use the tuple $\Sigma=(A,B,C_1,C_2,D,E,F,\varphi)$, it implicitly implies that $\varphi$ is nonlinear and $E$ is nonzero. 
\end{remark}

Similar to what is shown in \cite{murat1}, without loss of generality, we can assume $a=0$ in \eqref{rate} for the class of nonlinear control systems in \eqref{e:lin:sys}. If $a\neq0$, one can define a new function $\widetilde\varphi(r):=\varphi(r)-ar$ which satisfies \eqref{rate} with $\widetilde a=0$ and $\widetilde b=b-a$, and rewrite \eqref{e:lin:sys} as
\begin{IEEEeqnarray}{c}\notag\Sigma:\left\{
  \begin{IEEEeqnarraybox}[\relax][c]{rCl}
    \dot \xi&=&\widetilde A\xi+E\widetilde\varphi(F\xi)+B\upsilon+D\omega,\\
\zeta_1&=&C_1\xi,\\
\zeta_2&=&C_2\xi,
  \end{IEEEeqnarraybox}\right.
\end{IEEEeqnarray}
where $\widetilde A=A+aEF$.

\begin{remark}\label{nonlinear}
For simplicity of derivations, we restrict ourselves to systems with a single nonlinearity as in \eqref{e:lin:sys}. However, it would be straightforward to obtain analogous results for systems with multiple nonlinearities as
\begin{IEEEeqnarray}{c}\notag\Sigma:\left\{
  \begin{IEEEeqnarraybox}[\relax][c]{rCl}
    \dot \xi&=&A\xi+\sum_{i=1}^ME_i\varphi_i(F_i\xi)+B\upsilon+D\omega,\\
\zeta_1&=&C_1\xi,\\
\zeta_2&=&C_2\xi
  \end{IEEEeqnarraybox}\right.
\end{IEEEeqnarray}
where $\varphi_i:\R\rightarrow\R$ satisfies \eqref{rate} for some $a_i\in\R$ and $b_i\in\R_{>0}\cup\{\infty\}$, $E_i\in\R^{n\times 1}$, and $F_i\in\R^{1\times n}$, for any $i\in[1;M]$. Furthermore, the proposed results here can also be extended to systems with multivariable nonlinearities satisfying  a multivariable sector property along the same lines as in \cite{fan} in the context of observer design.
\end{remark}

Note that the class of nonlinear control systems in \eqref{e:lin:sys} and Remark \ref{nonlinear} has been used widely to model many physical systems including active magnetic bearing \cite{murat1}, flexible joint robot \cite{fan}, fuel cell \cite{murat3}, the power generators \cite{scholtz}, underwater vehicles \cite{aamo}, and so on.

\subsection{Quadratic storage functions}
Here, we consider a quadratic storage function of the form 
\begin{align}\label{e:lin:sf}
V(x,\hat x)=(x-P\hat x)^T\widehat M(x-P\hat x),
\end{align}
where $P$ and $\widehat M\succ0$ are some matrices of appropriate
dimensions. In order to show that $V$ in \eqref{e:lin:sf} is a storage function from an abstraction $\hat\Sigma$ to a concrete system $\Sigma$, we require the following key assumption on $\Sigma$. 

\begin{assumption}\label{assumption1}
Let $\Sigma=(A,B,C_1,C_2,D,E,F,\varphi)$. Assume that for some constant
$\widehat\kappa\in\R_{>0}$ there exist matrices $\widehat M\succ0$, $K$, $L_1$, $Z$, $W$, $X^{11}$, $X^{12}$, $X^{21}$, and $X^{22}\preceq0$ of appropriate dimensions such that the matrix equality
\begin{align}\label{e:lin:con2f}
D&= ZW,
\end{align}
and inequality \eqref{e:lin:con11} hold, where $0$'s in \eqref{e:lin:con11} denote zero matrices of appropriate dimensions. 
\begin{figure*}
\begin{align}\label{e:lin:con11}
\begin{bmatrix}
\left(A+BK\right)^T\widehat M+\widehat M\left(A+BK\right) & \widehat MZ & \widehat M(BL_1+E)\\
Z^T\widehat M & 0 & 0\\
(BL_1+E)^T\widehat M& 0 & 0
\end{bmatrix}&\preceq\begin{bmatrix}
-\widehat\kappa\widehat M+C_2^TX^{22}C_2 & C_2^TX^{21}& -F^T\\
X^{12}C_2 & X^{11} & 0\\
-F& 0 & \frac{2}{b}
\end{bmatrix}
\end{align}
--------------------------------------------------------------------------------------------------------------------------------------------
\vspace{-5mm}
\end{figure*}
\end{assumption}

The next rather straightforward result provides a necessary and sufficient geometric condition for the existence of matrix $W$ appearing in condition \eqref{e:lin:con2f}.
\begin{lemma}\label{l:lin:con:W}
Given $D$ and $Z$, condition~\eqref{e:lin:con2f} is satisfied for some matrix $W$ if and only if
\begin{IEEEeqnarray}{c}\label{e:lin:con:W}
\im D\subseteq \im Z.
\end{IEEEeqnarray}
\end{lemma}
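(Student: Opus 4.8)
The plan is to treat the matrix equation~\eqref{e:lin:con2f} as a column-wise collection of linear systems and to establish the two implications of the stated equivalence separately; both reduce to elementary facts about column spaces.

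For the necessity direction I would assume that a matrix $W$ satisfying $D=ZW$ exists. Then for every vector $v$ in the domain of $D$ one has $Dv=Z(Wv)$, so $Dv\in\im Z$. Since this holds for all $v$, every element of $\im D$ lies in $\im Z$, which is exactly the inclusion~\eqref{e:lin:con:W}.

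For the sufficiency direction I would assume~\eqref{e:lin:con:W} and construct $W$ explicitly. Writing $D=[d_1\;\cdots\;d_p]$ column by column, each column satisfies $d_j=De_j\in\im D\subseteq\im Z$, so by the definition of the image there exists a vector $w_j$ with $Zw_j=d_j$. Collecting these columns into $W:=[w_1\;\cdots\;w_p]$ yields $ZW=[Zw_1\;\cdots\;Zw_p]=[d_1\;\cdots\;d_p]=D$, as required.

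There is no genuine obstacle here; the statement is a standard characterization of when one matrix factors through another. The only point worth recording is dimensional consistency: the construction produces a $W$ with one column per column of $D$ and a number of rows equal to the number of columns of $Z$, which matches the ``appropriate dimensions'' referred to in Assumption~\ref{assumption1} automatically. For readers who prefer a non-constructive formulation, sufficiency also follows at once from the rank criterion $\operatorname{rank}[Z\;\;D]=\operatorname{rank}Z$, which is equivalent both to $\im D\subseteq\im Z$ and, by the Rouch\'e--Capelli theorem applied to each column of $D$ separately, to the solvability of $ZW=D$.
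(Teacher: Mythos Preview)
Your proof is correct; the paper itself omits the proof entirely, calling the lemma ``rather straightforward,'' so your argument in fact supplies what the paper leaves to the reader. The column-by-column construction you give is the standard way to make this precise.
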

Note that the feasibility characterization of LMI \eqref{e:lin:con11} is more involved and will be discussed in details at the end of this section.

Note that matrix inequality \eqref{e:lin:con11} is bilinear in the variables $\widehat M$, $K$, $L_1$, $Z$, and linear in the variables $X^{11}$, $X^{12}$, $X^{21}$, and $X^{22}$ when we fix the constant $\widehat\kappa$. 
However, by assuming $C_2$ is a square and invertible matrix and introducing new variables $\ol K=K\widehat M^{-1}$, $\ol M=\widehat M^{-1}$, $\ol X^{22}=\widehat M^{-1}C_2^TX^{22}C_2\widehat M^{-1}$, $\ol X^{21}=\widehat M^{-1}C_2^TX^{21}$, $\ol X^{12}=X^{12}C_2\widehat M^{-1}$, and multiplying \eqref{e:lin:con11} from both sides by $$\begin{bmatrix}
\widehat M^{-1}&0&0\\0&I_n&0\\0&0&1
\end{bmatrix},$$where $0$'s denote zero matrices of appropriate dimension,
one obtains the matrix inequality \eqref{LMI} which is an LMI (linear matrix inequality) in the variables $\ol M$, $\ol K$, $L_1$, $Z$, $\ol X^{22}$, $\ol X^{21}$, $\ol X^{12}$, and $X^{11}$ when $\widehat \kappa$ is a fixed constant. 
\begin{figure*}
\begin{align}\label{LMI}
\begin{bmatrix}
\ol MA^T+A\ol M+\ol K^TB^T+B\ol K & Z & BL_1+E\\
Z^T & 0 & 0\\
E^T+L_1^TB^T& 0 & 0
\end{bmatrix}&\preceq\begin{bmatrix}
-\widehat\kappa \ol M+\ol X^{22} & \ol X^{21}& -\ol MF^T\\
\ol X^{12} & X^{11} & 0\\
-F\ol M& 0 & \frac{2}{b}
\end{bmatrix}
\end{align}
--------------------------------------------------------------------------------------------------------------------------------------------
\end{figure*}

\begin{remark}
Note that one can combine the compositionality condition \eqref{e:SGcondition} with a simultaneous search for quadratic storage functions \eqref{e:lin:sf} for subsystems of the form \eqref{e:lin:sys}. In particular, assume we are given $N$ control subsystems $\Sigma_i=(A_i,B_i,C_{1i},C_{2i},D_i,E_i,F_i,\varphi_i)$, $\forall i\in[1;N]$. For any $i\in[1;N]$, one can consider matrices $X_i$ in the LMI \eqref{e:SGcondition} as decision variables instead of being fixed and $\mu_i=1$ without loss of generality, and solve the combined feasibility problems \eqref{e:lin:con11} and \eqref{e:SGcondition}. Although the combined feasibility problem may be huge for large networks and solving it directly may be intractable, one can use the alternating direction method of multipliers (ADMM) to solve the feasibility problem in a distributed fashion along the same lines proposed in \cite{meissen}.
\end{remark}

Now, we provide one of the main results of this section showing under which conditions $V$ in \eqref{e:lin:sf} is a storage function.
\begin{theorem}\label{t:lin:suf}
Let $\Sigma=(A,B,C_1,C_2,D,E,F,\varphi)$
and $\hat \Sigma=(\hat A,\hat B,\hat C_1,\hat C_2,\hat D,\hat E,\hat F,\varphi)$ with $q_1=\hat q_1$. Suppose Assumption \ref{assumption1} holds and that there exist matrices $P$, $Q$, $H$, $L_2$, and $\hat W$ such that
\begin{IEEEeqnarray}{rCl}\label{e:lin:con2} \IEEEyesnumber
\IEEEyessubnumber\label{e:lin:con2a} AP&=&P\hat A-BQ\\
\IEEEyessubnumber\label{e:lin:con2b} C_1P&=&\hat C_1\\
\IEEEyessubnumber\label{e:lin:con2c1} X^{12}C_2P&=&X^{12}H\hat C_2\\
\IEEEyessubnumber\label{e:lin:con2c2} X^{22}C_2P&=&X^{22}H\hat C_2\\
\IEEEyessubnumber\label{e:lin:con2d}  FP&=&\hat F\\
\IEEEyessubnumber\label{e:lin:con2e}  E&=&P\hat E-B(L_1-L_2)\\
\IEEEyessubnumber\label{e:lin:con2g}  P\hat D&=&Z\hat W,
\end{IEEEeqnarray}
hold. Then, function $V$ defined in~\eqref{e:lin:sf} is a storage function from~$\hat\Sigma$ to~$\Sigma$.
\end{theorem}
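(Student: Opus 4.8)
The plan is to verify the two defining properties of a storage function from Definition \ref{d:stf} for $V(x,\hat x)=(x-P\hat x)^T\widehat M(x-P\hat x)$, writing $e:=x-P\hat x$ throughout. The lower bound \eqref{e:sf:1} is immediate: by \eqref{e:lin:con2b} one has $h_1(x)-\hat h_1(\hat x)=C_1x-\hat C_1\hat x=C_1(x-P\hat x)=C_1e$, so $\Vert h_1(x)-\hat h_1(\hat x)\Vert^2\le\Vert C_1\Vert^2\Vert e\Vert^2\le\frac{\Vert C_1\Vert^2}{\lambda_{\min}(\widehat M)}V(x,\hat x)$, and I take $\alpha(r):=\frac{\lambda_{\min}(\widehat M)}{\Vert C_1\Vert^2}r^2\in\mathcal K_\infty$ (the case $C_1=0$ being trivial, since then $\hat C_1=C_1P=0$ as well).

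The heart of the proof is the dissipation inequality \eqref{inequality1}. First I would propose the interface
$$u=K(x-P\hat x)+Q\hat x+L_1\varphi(Fx)-L_2\varphi(\hat F\hat x),$$
which depends only on $x,\hat x$ (hence satisfies the requirement that it not depend on the internal inputs). Since $\nabla V(x,\hat x)^T[f;\hat f]=2e^T\widehat M\bigl(f(x,u,w)-P\hat f(\hat x,\hat u,\hat w)\bigr)$, I would substitute the dynamics and collapse every $\hat x$- and $\varphi(\hat F\hat x)$-dependent term using the constraints: $P\hat A=AP+BQ$ from \eqref{e:lin:con2a} turns $Ax-P\hat A\hat x+BQ\hat x+BKe$ into $(A+BK)e$; writing $r:=\varphi(Fx)-\varphi(\hat F\hat x)$ and using $P\hat E=E+B(L_1-L_2)$ from \eqref{e:lin:con2e} collapses the nonlinear terms to $(E+BL_1)r$; and $D=ZW$ (Assumption \ref{assumption1}) together with $P\hat D=Z\hat W$ from \eqref{e:lin:con2g} turns the coupling terms into $Z(Ww-\hat W\hat w)$. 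With $\omega_{\mathrm{diff}}:=Ww-\hat W\hat w$ this yields the clean identity
$$f(x,u,w)-P\hat f(\hat x,\hat u,\hat w)=(A+BK)e+(E+BL_1)r+Z\omega_{\mathrm{diff}}-P\hat B\hat u.$$

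Setting $\chi:=[e;\omega_{\mathrm{diff}};r]$, the contributions of the first three summands to $2e^T\widehat M(\cdot)$ assemble exactly into $\chi^T\Theta\chi$, where $\Theta$ is the left-hand matrix of \eqref{e:lin:con11}; hence \eqref{e:lin:con11} bounds them above by $\chi^T\Xi\chi$, with $\Xi$ the right-hand matrix. Expanding $\chi^T\Xi\chi$ I would (i) read off $-\widehat\kappa\,e^T\widehat M e=-\widehat\kappa V$, supplying the decay; (ii) discard the scalar $\frac{2}{b}r^2-2(Fe)r$, which is $\le0$ by the sector bound \eqref{rate} with $a=0$ applied to $v=Fx$ and $w=\hat F\hat x=FP\hat x$ (here \eqref{e:lin:con2d} gives $\hat F=FP$, so $r$ and $Fe$ are precisely the matching increments $\varphi(v)-\varphi(w)$ and $v-w$); and (iii) recognise the surviving $C_2e$-terms as the quadratic form on the right of \eqref{inequality1} evaluated at $h_2(x)-H\hat h_2(\hat x)=C_2x-H\hat C_2\hat x$, invoking \eqref{e:lin:con2c1} and \eqref{e:lin:con2c2} in the forms $X^{12}C_2e=X^{12}(h_2(x)-H\hat h_2(\hat x))$ and $X^{22}C_2e=X^{22}(h_2(x)-H\hat h_2(\hat x))$, which is what lets the off-diagonal and $X^{22}$-blocks reassemble.

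The one term left over is the external-input cross term $-2e^T\widehat M P\hat B\hat u$, and this is where the real work lies: because $\hat B$ is unconstrained it cannot be cancelled by the interface. I would absorb it by a weighted Young's inequality, $-2e^T\widehat M P\hat B\hat u\le\theta\,e^T\widehat M e+\theta^{-1}\hat u^T\hat B^TP^T\widehat M P\hat B\hat u=\theta V+\theta^{-1}\hat u^T\hat B^TP^T\widehat M P\hat B\hat u$, and choose $0<\theta<\widehat\kappa$. This delivers \eqref{inequality1} with the matrices $W,\hat W,H,X$ of Assumption \ref{assumption1} and the theorem's hypotheses, with $\eta(s)=(\widehat\kappa-\theta)s\in\mathcal K_\infty$ and $\rho_{\mathrm{ext}}(s)=\theta^{-1}\lambda_{\max}(\hat B^TP^T\widehat M P\hat B)\,s^2\in\mathcal K_\infty\cup\{0\}$. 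The two delicate points are the sign bookkeeping in the interface (so that every $\hat x$-dependent and $\varphi(\hat F\hat x)$-dependent term cancels exactly, which is what forces the specific combination $L_1\varphi(Fx)-L_2\varphi(\hat F\hat x)$), and the fact that \eqref{e:lin:con2c1}–\eqref{e:lin:con2c2} hold only after pre-multiplication by $X^{12}$ and $X^{22}$; the identification in step (iii) must therefore be carried out at the level of those products rather than by assuming the stronger $C_2P=H\hat C_2$.
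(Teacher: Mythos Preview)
Your proof is correct and follows essentially the same route as the paper: the same quadratic lower bound via \eqref{e:lin:con2b}, the same collapse of $f-P\hat f$ via \eqref{e:lin:con2a}, \eqref{e:lin:con2d}, \eqref{e:lin:con2e}, \eqref{e:lin:con2f}, \eqref{e:lin:con2g}, the same application of the LMI \eqref{e:lin:con11} to the stacked vector, the sector bound to kill the $r$-terms, and a Young-type split for the $\hat u$ cross term. The only notable deviation is that the paper's interface carries an extra additive term $\widetilde R\hat u$ (so the residual is $(B\widetilde R-P\hat B)\hat u$ rather than $-P\hat B\hat u$); this makes no difference for the theorem as stated, but it gives a free parameter that the paper later tunes in \eqref{e:lin:con:tildeR} to shrink $\rho_{\mathrm{ext}}$.
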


Before providing the proof, we point out that there always exist matrices $\{\hat A,\hat B,\hat C_1,\hat C_2,\hat D,\hat E,\hat F\}$ satisfying \eqref{e:lin:con2} if $P=I_n$ implying that $\hat n=n$. Naturally, it is better to have the simplest abstraction $\hat\Sigma$ and, therefore, one should seek a $P$ with $\hat n$ as small as possible. We elaborate on the construction of $P$ satisfying \eqref{e:lin:con2} in details in the next subsection.

\begin{proof}
From~\eqref{e:lin:con2b} and for all $x\in\R^n$, $\hat
x\in\R^{\hat n}$, we have $\Vert C_1x-\hat C_1\hat x\Vert^2=(x-P\hat
x)^TC_1^TC_1(x-P\hat x)$. It can be readily verified that 
$\frac{\lambda_{\min}(\widehat M)}{\lambda_{\max}(C_1^TC_1)}\Vert C_1x-\hat C_1\hat x\Vert^2\le V(x,\hat x)$ holds for all $x\in\R^n$, $\hat
x\in\R^{\hat n}$ implying that inequality \eqref{e:sf:1} holds with $\alpha(r)=\frac{\lambda_{\min}(\widehat M)}{\lambda_{\max}(C_1^TC_1)}r^2$ for any $r\in\R_{\geq0}$. 
We proceed with showing that the inequality~\eqref{inequality1} holds. 
Note that 
\begin{align}\label{partial}
\frac{\partial V(x,\hat x)}{\partial x}
= 
2(x-P\hat x)^T\widehat M,
\frac{\partial V(x,\hat x)}{\partial \hat x}
=
-2(x-P\hat x)^T\widehat MP.
\end{align}
Given any $x\in\R^n$, $\hat x\in\R^{\hat n}$, and $\hat u\in\R^{\hat m}$, we choose $u\in\R^{m}$ via the following linear \emph{interface} function:
\begin{align}\label{e:lin:int}
u=&k(x,\hat x,\hat u):=K(x-P\hat x)+Q\hat x+\widetilde R\hat u+L_1\varphi(Fx)-L_2\varphi(FP \hat x),
\end{align}
for some matrix $\widetilde R$ of appropriate dimension.

By using the equations~\eqref{e:lin:con2a}, \eqref{e:lin:con2d}, and \eqref{e:lin:con2e} and the definition of the interface function in~\eqref{e:lin:int}, we get
\begin{align*}
&Ax+E\varphi(Fx)+Bk(x,\hat x, \hat u)+Dw-P(\hat A\hat x+\hat E\varphi(\hat F\hat x)+\hat
B\hat u+\hat D\hat w)=\\&(A+BK)(x-P\hat x)+(Dw-P\hat D\hat w)+(B\widetilde R-P\hat B)\hat u+(E+BL_1)(\varphi(Fx)-\varphi(FP\hat x)).
\end{align*}
Using \eqref{partial}, \eqref{e:lin:con2f}, and \eqref{e:lin:con2g}, we obtain the following expression for $\dot V(x,\hat x)$:
\begin{align*}
\dot V(x,\hat x)=&2(x-P\hat x)^T\widehat M
\big[
(A+BK)(x-P\hat x)
+
(ZWw-Z\hat W\hat w)
\\\notag&+
(B\widetilde R-P\hat B)\hat u
+(E+BL_1)(\varphi(Fx)-\varphi(FP\hat x))\big].
\end{align*}
From the slope restriction \eqref{rate}, one obtains
\begin{equation}\label{slope}
\varphi(Fx)-\varphi(FP\hat x)=\delta(Fx-FP\hat x)=\delta F(x-P\hat x),
\end{equation}
where $\delta$ is a constant and depending on $x$ and $\hat x$ takes values in the interval $[0,b]$. Using \eqref{slope}, the expression for $\dot V(x,\hat x)$ reduces to:
\begin{align*}
&\dot V(x,\hat x)=2(x-P\hat x)^T\widehat M
\big[
((A+BK)+\delta(E+BL_1)F)(x-P\hat x)
+
Z(Ww-\hat W\hat w)
+
(B\widetilde R-P\hat B)\hat u\big].
\end{align*}
Using Young's inequality \cite{Young225} as $$ab\leq \frac{\epsilon}{2}a^2+\frac{1}{2\epsilon}b^2,$$ for any $a,b\in\R$ and any $\epsilon>0$,
and with the help of Cauchy-Schwarz inequality, \eqref{e:lin:con11}, \eqref{e:lin:con2c1}, and \eqref{e:lin:con2c2}, one gets the following upper bound for $\dot V(x,\hat x)$:
\begin{align*}
\dot V(x&,\hat x)=2(x-P\hat x)^T\widehat M\big[
((A+BK)+\delta(E+BL_1)F)(x-P\hat x)
+
Z(Ww-\hat W\hat w)
+
(B\widetilde R-P\hat B)\hat u\big]\\=& 
\begin{bmatrix}x-P\hat x\\Ww-\hat W\hat w\\\delta F(x-P\hat x)\end{bmatrix}^T\begin{bmatrix}
\left(A+BK\right)^T\widehat M+\widehat M\left(A+BK\right)&\widehat MZ&\widehat M(BL_1+E)\\
Z^T\widehat M & 0 & 0\\
(BL_1+E)^T\widehat M& 0 & 0
\end{bmatrix}\begin{bmatrix}x-P\hat x\\Ww-\hat W\hat w\\\delta F(x-P\hat x)\end{bmatrix}\\\notag&+2(x-P\hat x)^T\widehat M
(B\widetilde R-P\hat B)\hat u\\\le\notag&
\begin{bmatrix}x-P\hat x\\Ww-\hat W\hat w\\\delta F(x-P\hat x)\end{bmatrix}^T\begin{bmatrix}
-\widehat\kappa\widehat M+C_2^TX^{22}C_2&C_2^TX^{21}&-F^T\\
X^{12}C_2&X^{11}&0\\
-F&0&\frac{2}{b}
\end{bmatrix}\begin{bmatrix}x-P\hat x\\Ww-\hat W\hat w\\\delta F(x-P\hat x)\end{bmatrix}+2(x-P\hat x)^T\widehat M(B\widetilde R-P\hat B)\hat u\\=&
-\widehat\kappa V(x,\hat x)-2\delta(1-\frac{\delta}{b})(x-P\hat x)^TF^TF(x-P\hat x)+\begin{bmatrix}Ww-\hat W\hat w\\C_2x-H\hat C_2\hat x\end{bmatrix}^T\begin{bmatrix}
X^{11}&X^{12}\\
X^{21}&X^{22}
\end{bmatrix}\begin{bmatrix}Ww-\hat W\hat w\\C_2x-H\hat C_2\hat x\end{bmatrix}\\&+2(x-P\hat x)^T\widehat M(B\widetilde R-P\hat B)\hat u\\\le& -(\widehat\kappa-\pi) V(x,\hat x)+\frac{\Vert\sqrt{\widehat M}(B\widetilde R-P\hat B)\Vert^2}{\pi}\Vert\hat u\Vert^2+\begin{bmatrix}Ww-\hat W\hat w\\C_2x-H\hat C_2\hat x\end{bmatrix}^T\begin{bmatrix}
X^{11}&X^{12}\\
X^{21}&X^{22}
\end{bmatrix}\begin{bmatrix}Ww-\hat W\hat w\\C_2x-H\hat C_2\hat x\end{bmatrix},
\end{align*}
for any positive constant $\pi<\widehat\kappa$.

Using this computed upper bound, the inequality \eqref{inequality1} is satisfied with the functions
$\eta\in\mathcal{K}_\infty$, $\rho_{\mathrm{ext}}\in\mathcal{K}_\infty\cup\{0\}$, and the matrix $X$,
as $\eta(s):=(\widehat\kappa-\pi) s$, $\rho_{\mathrm{ext}}(s):=\frac{\Vert\sqrt{\widehat M}(B\widetilde R-P\hat B)\Vert^2}{\pi} s^2$, $\forall s\in\R_{\ge0}$, and $X=\begin{bmatrix}
X^{11}&X^{12}\\
X^{21}&X^{22}
\end{bmatrix}$.
\end{proof}

The next result shows that conditions~\eqref{e:lin:con2a}-\eqref{e:lin:con2e}
are actually necessary for~\eqref{e:lin:sf} being a 
storage function from $\hat\Sigma$ to $\Sigma$ provided that the structure of the interface function is as in~\eqref{e:lin:int} for some matrices $K$, $Q$, $\widetilde R$, $L_1$, and $L_2$ of appropriate dimension.

\begin{theorem}\label{t:lin:nec}
Let $\Sigma=(A,B,C_1,C_2,D,E,F,\varphi)$
and $\hat \Sigma=(\hat A,\hat B,\hat C_1,\hat C_2,\hat
D,\hat E,\hat F,\varphi)$ with $q_1=\hat q_1$. Suppose that 
$V$ defined in~\eqref{e:lin:sf} is a storage function from~$\hat\Sigma$ to~$\Sigma$ with
the interface $k$ given in~\eqref{e:lin:int}. Then,
equations~\eqref{e:lin:con2a}-\eqref{e:lin:con2e} hold.
\end{theorem}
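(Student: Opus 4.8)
The plan is to unfold the dissipation inequality \eqref{inequality1} for the quadratic candidate \eqref{e:lin:sf} under the linear interface \eqref{e:lin:int}, and then to read off each of \eqref{e:lin:con2a}-\eqref{e:lin:con2e} by probing the inequality \emph{on and around the synchronization manifold} $\{x=P\hat x\}$, on which both $V$ and $\nabla V$ vanish. Writing $e\Let x-P\hat x$, the same computation as in the proof of Theorem \ref{t:lin:suf}, but \emph{without} imposing any of \eqref{e:lin:con2a}-\eqref{e:lin:con2e}, gives
\[
\dot e=(A+BK)e+(AP+BQ-P\hat A)\hat x+(E+BL_1)\varphi(Fx)-BL_2\varphi(FP\hat x)-P\hat E\varphi(\hat F\hat x)+(Dw-P\hat D\hat w)+(B\widetilde R-P\hat B)\hat u,
\]
together with $\nabla V(x,\hat x)^T[f;\hat f]=2e^T\widehat M\dot e$. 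Condition \eqref{e:lin:con2b} is then immediate: substituting $x=P\hat x$ into \eqref{e:sf:1} forces $\alpha(\Vert C_1P\hat x-\hat C_1\hat x\Vert)\le V(P\hat x,\hat x)=0$ for every $\hat x$, and since $\alpha\in\mathcal{K}_\infty$ this gives $C_1P=\hat C_1$.

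Next I would extract \eqref{e:lin:con2c2} and \eqref{e:lin:con2c1} from the supply term alone. Evaluating \eqref{inequality1} at $x=P\hat x$ (so its left-hand side is $0$) with $\hat u=0$, and writing $\nu\Let Ww-\hat W\hat w$ and $\Delta\Let C_2P-H\hat C_2$, the right-hand side collapses to $\nu^TX^{11}\nu+2\nu^TX^{12}\Delta\hat x+(\Delta\hat x)^TX^{22}\Delta\hat x\ge0$. Choosing $\nu=0$ and invoking $X^{22}\preceq0$ forces $(\Delta\hat x)^TX^{22}\Delta\hat x=0$, hence $X^{22}\Delta\hat x=0$ for all $\hat x$, which is \eqref{e:lin:con2c2}. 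With \eqref{e:lin:con2c2} in force the $\hat x$-quadratic drops out, and scaling $\nu\to s\nu$ with $s\to0^{+}$ shows the surviving linear term $2\nu^TX^{12}\Delta\hat x$ must be nonnegative for both signs of $\nu$, giving \eqref{e:lin:con2c1} (on the range of the internal-input channel, which is all that enters \eqref{inequality1}).

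The dynamic conditions \eqref{e:lin:con2d}-\eqref{e:lin:con2e} and \eqref{e:lin:con2a} I would obtain from a first-order stationarity argument. Fix $\hat u=0$ and $w=\hat w=0$ (so $\nu=0$ and the internal term in $\dot e$ vanishes), and let $\Phi(x)$ be the right-hand minus the left-hand side of \eqref{inequality1}; then $\Phi\ge0$ everywhere while \eqref{e:lin:con2c2} makes $\Phi(P\hat x)=0$, so $x=P\hat x$ globally minimizes $\Phi$. Since $V$ contributes no first-order term at $x=P\hat x$, the supply term likewise contributes none there (again by \eqref{e:lin:con2c2}), and $-\eta(V)\le0$, a two-sided comparison of the inequality along $x=P\hat x+tv$ shows the first-order coefficient $2v^T\widehat M\,\dot e|_{x=P\hat x}$ must vanish for all $v$; as $\widehat M\succ0$ this yields $\dot e|_{x=P\hat x}=0$ for every $\hat x$. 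On $\{x=P\hat x\}$ we have $Fx=FP\hat x$, so this reads
\[
(AP+BQ-P\hat A)\hat x+\big(E+B(L_1-L_2)\big)\varphi(FP\hat x)-P\hat E\,\varphi(\hat F\hat x)=0\qquad\forall\hat x.
\]

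This identity is the crux. Its left member is linear in $\hat x$, whereas $\varphi$ is nonlinear (consistent with the standing assumption that $\varphi$ is nonlinear and $\hat E\neq0$); a difference of two rescaled copies of a nonlinear $\varphi$ cannot be affine unless the copies coincide. This forces the arguments to match, $FP=\hat F$, which is \eqref{e:lin:con2d}; with the arguments equal, the coefficient $E+B(L_1-L_2)-P\hat E$ multiplying the genuinely nonlinear term $\varphi(FP\hat x)$ must vanish, giving \eqref{e:lin:con2e}; and the residual linear identity then reduces to $(AP+BQ-P\hat A)\hat x=0$ for all $\hat x$, i.e. \eqref{e:lin:con2a}. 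Making this final peeling rigorous is the main obstacle: I would restrict $\hat x$ to lines $\hat x=t\hat x_0$, so that the identity becomes $t\,\ell=p\,\varphi(t\,FP\hat x_0)-q\,\varphi(t\,\hat F\hat x_0)$ with fixed vectors $p=P\hat E$, $q=E+B(L_1-L_2)$ and scalar $\ell=(AP+BQ-P\hat A)\hat x_0$, and then use the existence of a point where $\varphi$ departs from linearity to preclude cancellation unless the arguments and coefficients align in the stated way.
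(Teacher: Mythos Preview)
Your proof is correct and reaches the same key identity as the paper, but the route to it is genuinely different. For \eqref{e:lin:con2b} and for \eqref{e:lin:con2c1}--\eqref{e:lin:con2c2} you argue pointwise (evaluate \eqref{inequality1} at $e=0$, where the left side vanishes trivially because $2e^T\widehat M\dot e=0$), whereas the paper first sets $\hat\upsilon,\omega,\hat\omega\equiv0$, invokes a $\mathcal{KL}$ comparison lemma to conclude that the manifold $\{x=P\hat x\}$ is forward invariant, and only then reads off $\dot V(P\hat x,\hat x)=0$ to isolate the $X^{22}$ term. The more substantial divergence is in deriving the identity $(AP+BQ-P\hat A)\hat x+(E+B(L_1-L_2))\varphi(FP\hat x)-P\hat E\varphi(\hat F\hat x)=0$: the paper obtains it from $\xi(t)=P\hat\xi(t)$ and $\dot\xi=P\dot{\hat\xi}$ along the invariant trajectory, while you obtain it from a first-order stationarity argument (the function $\text{RHS}-\text{LHS}$ of \eqref{inequality1} is nonnegative, vanishes at $e=0$, and has no first-order supply contribution by \eqref{e:lin:con2c2}, forcing $v^T\widehat M\dot e|_{e=0}=0$ for all $v$). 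Your argument is more elementary in that it avoids the ODE comparison lemma entirely and works purely algebraically at the level of the dissipation inequality; the paper's approach, by contrast, makes the invariance of $\{x=P\hat x\}$ explicit, which is conceptually appealing but requires an external result.

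Both proofs then need to ``peel'' the resulting scalar-nonlinearity identity into \eqref{e:lin:con2a}, \eqref{e:lin:con2d}, \eqref{e:lin:con2e}. The paper simply asserts this step; you correctly flag it as the main residual obstacle and sketch a line-restriction argument exploiting the standing assumption that $\varphi$ is genuinely nonlinear and $\hat E\neq0$. Your treatment here is, if anything, more honest than the paper's. (Your parenthetical caveat on \eqref{e:lin:con2c1}---that the conclusion is only forced on the range of $Ww-\hat W\hat w$---is also accurate; the paper's proof implicitly assumes this range is full, and carries the same gap.)
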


\begin{proof}
Since $V$ is a storage function from $\hat\Sigma$ to $\Sigma$, there exists a $\mathcal{K}_\infty$
function $\alpha$ such that $\Vert C_1x-\hat C_1\hat x\Vert\le \alpha^{-1}\left(V(x,\hat
x)\right)$. From~\eqref{e:lin:sf}, it follows that 
$\Vert C_1P\hat x-\hat C_1\hat x\Vert\le \alpha^{-1}(V(P\hat x,\hat x))=0$
holds for all $\hat x\in\R^{\hat n}$ which
implies~\eqref{e:lin:con2b}. 

Let us consider the
inputs $\hat \upsilon\equiv0$, $\omega\equiv0$, and $\hat
\omega\equiv0$. Since $X^{22}\preceq0$, inequality \eqref{inequality1} reduces to 
\begin{align}\label{vder}
&\dot V(x,\hat x)\leq-\eta(V(x,\hat x))+(h_2(x)-H\hat h_2(\hat x))^TX^{22}(h_2(x)-H\hat h_2(\hat x))\leq-\eta(V(x,\hat x)),
\end{align}
for any $x\in\R^n$ and $\hat x\in\R^{\hat n}$. Using the results in Lemma 4.4 in \cite{LSW96} or Lemma 3.6 in \cite{majid19}, inequality \eqref{vder} implies the existence of a $\mathcal{KL}$ function $\vartheta$ such that
\begin{align}\label{e:lin:ineq}
V(\xi(t),\hat\xi(t))\leq\vartheta(V(\xi(0),\hat\xi(0)),t),
\end{align}
holds, where $\hat \upsilon\equiv0$, $\omega\equiv0$, $\hat
\omega\equiv0$, and $\upsilon$ is given by the interface function
$k$ in~\eqref{e:lin:int}. Then,
for all
$\xi(0)=P\hat\xi(0)$, \mbox{$t\ge0$}, and using~\eqref{e:lin:ineq}, we obtain 
$V(\xi(t),\hat \xi(t))=0$. Since $M$ is positive definite, we have 
\begin{IEEEeqnarray*}{c't'c}
\xi(t)=P \hat \xi(t)& and &
\dot \xi(t)=P\dot{ \hat \xi}(t),
\end{IEEEeqnarray*}
from which we derive that 
$$AP\hat x+BQ\hat x+(E+B(L_1-L_2))\varphi(FP\hat x)=P\hat A\hat x+P\hat E\varphi(\hat F\hat x)$$
holds for all $\hat x\in\R^{\hat n}$ and, hence,~\eqref{e:lin:con2a}, \eqref{e:lin:con2d}, and \eqref{e:lin:con2e} follows.
It remains to show that~\eqref{e:lin:con2c1} and \eqref{e:lin:con2c2} hold. First assume $X^{22}\neq0$. Since $\dot V(P\hat x,\hat x)=V(P\hat x,\hat x)=0$ and using the first inequality in \eqref{vder}, one gets $$(C_2P\hat x-H\hat C_2\hat x)^TX^{22}(C_2P\hat x-H\hat C_2\hat x)\geq0,$$for any $\hat x\in\R^{\hat n}$. Since $X^{22}\preceq0$ and by assumption $X^{22}\neq0$, one obtains $X^{22}(C_2P-H\hat C_2)=0$ which implies \eqref{e:lin:con2c2}. Now, let us consider the
inputs $\hat \upsilon\equiv0$, $\omega\not\equiv0$, and $\hat \omega\not\equiv0$. Therefore, inequality \eqref{inequality1} reduces to 
\begin{align}\label{vder1}
\dot V(x,\hat x)\leq&-\eta(V(x,\hat x))+(Ww-\hat W\hat w)^TX^{11}(Ww-\hat W\hat w)+2(Ww-\hat W\hat w)^TX^{12}(C_2x-H\hat C_2\hat x),
\end{align}
for any $x\in\R^n$, $\hat x\in\R^{\hat n}$, $w\in\R^{p}$, and $\hat w\in\R^{\hat p}$. From \eqref{vder1} and by choosing $x=0_n$ and $\hat x=0_{\hat n}$, one can readily verify that $X^{11}\succeq0$. Then,
for all
$x=P\hat x$, we obtain
\begin{align*}
&(Ww-\hat W\hat w)^TX^{11}(Ww-\hat W\hat w)+2(Ww-\hat W\hat w)^TX^{12}(C_2P-H\hat C_2)\hat x\geq0,
\end{align*}
for any $w$, $\hat w$, and $\hat x$, which implies $X^{12}(C_2P-H\hat C_2)=0$ and, hence, \eqref{e:lin:con2c1} holds.
\end{proof}

\begin{remark}
Note that matrix $\widetilde R$ is a free design parameter in the interface function \eqref{e:lin:int}. Using the results in~\cite[Proposition 1]{GP09}, we choose $\widetilde R$ to minimize function $\rho_{\mathrm{ext}}$ for $V$ and, hence, reduce the upper bound in \eqref{inequality0} on the error between the output behaviors of $\Sigma$ and $\hat\Sigma$. The choice of $\widetilde R$ minimizing $\rho_{\mathrm{ext}}$ is given by
\begin{IEEEeqnarray}{c}\label{e:lin:con:tildeR}
\widetilde R=(B^T\widehat MB)^{-1} B^T\widehat M P\hat B.
\end{IEEEeqnarray}
\end{remark}


So far, we extracted various conditions on the
original system matrices $\{A,B,C_1,C_2,D,E,F\}$, the abstraction matrices $\{\hat A,\hat B,\hat C_1,\hat C_2,\hat D,\hat E,\hat F\}$, and the ones appearing in~\eqref{e:lin:sf} and~\eqref{e:lin:int}. Those conditions
ensure that $V$ in~\eqref{e:lin:sf} is
a storage function from $\hat\Sigma$ to $\Sigma$ with the corresponding interface function in \eqref{e:lin:int} refining any control signal designed for $\hat\Sigma$ to the one for $\Sigma$.
Apparently, those requirements do not enforce any condition on matrix $\hat B$. For example, one can select $\hat B=I_{\hat n}$ making the abstract system $\hat \Sigma$ fully actuated. On the other hand, one can ask not only for the existence of a storage function from $\hat \Sigma$
to $\Sigma$, but additionally require that all the controllable behaviors (in the absence of internal inputs) of the
concrete system $\Sigma$ are preserved over the abstraction $\hat \Sigma$. We refer the interested readers to \cite[Subsection 4.1]{GP09} and \cite[Section V]{pappas} for more details on what we mean by preservation of controllable behaviors. 

The next theorem requires a condition on $\hat B$ in order to guarantee the preservation of controllable behaviors of $\Sigma$ over $\hat \Sigma$.

\begin{theorem}\label{t:B}
Let $\Sigma=(A,B,C_1,C_2,D,E,F,\varphi)$
and $\hat \Sigma=(\hat A,\hat B,\hat C_1,\hat C_2,\hat
D,\hat E,\hat F,\varphi)$ with $q_1=\hat q_1$. Suppose that there
exist matrices $P$, $Q$, $L_1$, and $L_2$ satisfying~\eqref{e:lin:con2a} and \eqref{e:lin:con2e}, and that matrix $\hat B$ is
given by
\begin{IEEEeqnarray}{c}\label{e:lin:con:B}
\hat B=[\hat PB\;\; \hat PAG],
\end{IEEEeqnarray}
where $\hat P$ and $G$ are assumed to satisfy 
\begin{IEEEeqnarray}{rCl}\label{e:lin:con3} \IEEEyesnumber
\IEEEyessubnumber\label{e:lin:con3a} C_1&=&\hat C_1\hat P\\
\IEEEyessubnumber\label{e:lin:con3b} I_n&=&P\hat P+GT\\
\IEEEyessubnumber\label{e:lin:con3c} I_{\hat n}&=& \hat PP\\
\IEEEyessubnumber\label{e:lin:con3d} F&=&\hat F\hat P,
\end{IEEEeqnarray}
for some matrix $T$. Then, for every trajectory
  $(\xi,\zeta_1,\zeta_2,\upsilon,0)$~of~$\Sigma$ there exists a trajectory
  $(\hat \xi,\hat \zeta_1,\hat\zeta_2,\hat \upsilon,0)$ of $\hat \Sigma$ where $\hat\xi=\hat P\xi$ and $\zeta_1=\hat \zeta_1$ hold.
\end{theorem}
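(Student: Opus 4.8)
The plan is to start from an arbitrary state trajectory $\xi$ of $\Sigma$ driven by $\upsilon$ with $\omega\equiv0$, define $\hat\xi:=\hat P\xi$, and produce an external input $\hat\upsilon$ for which $\hat\xi$ solves the differential equation of $\hat\Sigma$ with $\hat\omega\equiv0$. Since $\hat P$ is a constant matrix, $\hat\xi$ inherits local absolute continuity from $\xi$ and satisfies $\dot{\hat\xi}=\hat P\dot\xi$ almost everywhere, so the entire task reduces to rewriting $\hat P\dot\xi$ in the form $\hat A\hat\xi+\hat E\varphi(\hat F\hat\xi)+\hat B\hat\upsilon$.

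First I would substitute the concrete dynamics to get $\dot{\hat\xi}=\hat PA\xi+\hat PE\varphi(F\xi)+\hat PB\upsilon$. The nonlinear term matches at once: \eqref{e:lin:con3d} gives $\hat F\hat\xi=\hat F\hat P\xi=F\xi$, hence $\varphi(\hat F\hat\xi)=\varphi(F\xi)$. For the linear drift I would insert the splitting $\xi=P\hat\xi+GT\xi$ coming from \eqref{e:lin:con3b}, and use the two identities obtained by left-multiplying \eqref{e:lin:con2a} and \eqref{e:lin:con2e} by $\hat P$ and applying \eqref{e:lin:con3c} (that $\hat PP=I_{\hat n}$), namely $\hat PAP=\hat A-\hat PBQ$ and $\hat PE=\hat E-\hat PB(L_1-L_2)$. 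Collecting terms yields
\[
\dot{\hat\xi}=\hat A\hat\xi+\hat E\varphi(F\xi)+\hat PB\big[\upsilon-Q\hat\xi-(L_1-L_2)\varphi(F\xi)\big]+\hat PAG\,(T\xi).
\]

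Now the block structure $\hat B=[\hat PB\;\;\hat PAG]$ from \eqref{e:lin:con:B} is exactly tailored to the two residual groups, so I would set
\[
\hat\upsilon:=\big[\,\upsilon-Q\hat\xi-(L_1-L_2)\varphi(F\xi)\,;\,T\xi\,\big],
\]
which is measurable because $\xi$, $\upsilon$, and $\varphi(F\xi)$ are, and conclude $\dot{\hat\xi}=\hat A\hat\xi+\hat E\varphi(\hat F\hat\xi)+\hat B\hat\upsilon$. Finally \eqref{e:lin:con3a} gives $\hat\zeta_1=\hat C_1\hat\xi=\hat C_1\hat P\xi=C_1\xi=\zeta_1$, so the external outputs agree and $(\hat\xi,\hat\zeta_1,\hat\zeta_2,\hat\upsilon,0)$ is the desired trajectory of $\hat\Sigma$.

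I expect the only delicate point to be routing the leftover term $\hat PAG\,T\xi$ into the second block of $\hat B$: it is precisely to absorb this contribution\,---\,produced by the non-invertible reduction $\xi\mapsto\hat P\xi$ through the identity $I_n=P\hat P+GT$\,---\,that the augmented column $\hat PAG$ and the auxiliary matrix $T$ are introduced in \eqref{e:lin:con:B}--\eqref{e:lin:con3}. Everything else is a direct substitution using the stated equalities, and notably no use of the slope bound \eqref{rate} is required here, since both copies of the nonlinearity are evaluated at the same argument $F\xi=\hat F\hat\xi$.
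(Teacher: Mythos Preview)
Your proof is correct and follows essentially the same route as the paper's: define $\hat\xi=\hat P\xi$, use the splitting $I_n=P\hat P+GT$ together with the identities $\hat PAP=\hat A-\hat PBQ$ and $\hat PE=\hat E-\hat PB(L_1-L_2)$ obtained from \eqref{e:lin:con2a}, \eqref{e:lin:con2e}, \eqref{e:lin:con3c}, and then absorb the residual terms into $\hat B\hat\upsilon$ with the same choice of $\hat\upsilon$. Your additional remarks on measurability and on the slope bound being unnecessary are accurate but not required for the argument.
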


\begin{proof}
Let  $(\xi,\zeta_1,\zeta_2,\upsilon,0)$ be a trajectory of~$\Sigma$. We are
going to show that $(\hat P \xi,\zeta_1,\hat\zeta_2,\hat \upsilon,0)$ with 
\begin{IEEEeqnarray*}{c;t;c}
\hat \upsilon
=
\begin{bmatrix}
\upsilon-Q\hat P\xi-(L_1-L_2)\varphi(F\xi)\\
T\xi
\end{bmatrix},
\end{IEEEeqnarray*}
is a trajectory of $\hat \Sigma$. We use \eqref{e:lin:con3b} and derive
\begin{align*}
\hat P \dot\xi=&\hat PA\xi+\hat PE\varphi(F\xi)+\hat PB\upsilon=
\hat PAP\hat P\xi+\hat PA(I_n-P\hat P)\xi+\hat PE\varphi(F\xi)+\hat PB\upsilon\\
=&
\hat PAP\hat P\xi+\hat PAGT\xi+\hat PE\varphi(F\xi)+\hat PB\upsilon.
\end{align*}
Now we use the equations~\eqref{e:lin:con2a}, \eqref{e:lin:con2e}, \eqref{e:lin:con3c}, and \eqref{e:lin:con3d} and the definition of $\hat B$ and $\hat \upsilon$ to derive
\begin{align*}
\hat P \dot\xi
=&
\hat P(P \hat A-BQ)\hat P\xi+\hat PAGT\xi+\hat P(P\hat E-B(L_1-L_2))\varphi(F\xi)+\hat PB\upsilon\\
=&
\hat A\hat P \xi-\hat PBQ\hat P\xi+\hat PAGT\xi+\hat E\varphi(\hat F\hat P\xi)-\hat PB(L_1-L_2)\varphi(F\xi)+\hat PB\upsilon\\
=&
\hat A\hat P \xi+\hat E\varphi(\hat F\hat P\xi)+[\hat PB\; \hat PAG]\hat\upsilon=\hat A\hat P \xi+\hat E\varphi(\hat F\hat P\xi)+\hat B\hat\upsilon,
\end{align*}
showing that $(\hat P \xi,\hat \zeta_1,\hat\zeta_2,\hat \upsilon,0)$ is a trajectory of
$\hat \Sigma$. From $C_1=\hat C_1\hat P$ in \eqref{e:lin:con3a}, it follows that $\hat \zeta_1=\zeta_1$
which concludes the proof.
\end{proof}

\begin{remark}
Note that the previous result establishes that $\hat\Sigma$ (in the absence of internal inputs) is $\hat P$-related to $\Sigma$ as in \cite[Definition 3]{GP09}. We refer the interested readers to \cite{pappas} for more details about properties (e.g. controllability) of $\Phi$-related systems for some surjective smooth map $\Phi$.
\end{remark}

\subsection{Construction of abstractions}
Here, we provide several straightforward sufficient and necessary geometric conditions on matrices appearing in the definition of $\hat\Sigma$, of storage function and its corresponding interface function. The proposed geometric conditions facilitate the constructions of such matrices. First, we recall \cite[Lemma 2]{GP09} providing necessary and sufficient conditions for the existence of matrices $\hat A$ and $Q$ appearing in condition \eqref{e:lin:con2a}.

\begin{lemma}\label{l:lin:con:AQ}
Consider matrices $A$, $B$, and $P$. There exist matrices $\hat A$ and
$Q$ satisfying~\eqref{e:lin:con2a} if and only if
\begin{IEEEeqnarray}{c}\label{e:lin:con:AQ}
\im AP\subseteq \im P+\im B.
\end{IEEEeqnarray}
\end{lemma}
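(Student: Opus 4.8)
The plan is to recognize this as a standard linear-algebra statement about solvability of a matrix equation, and to prove the two implications of the ``if and only if'' separately by arguing columnwise. The equation in question is \eqref{e:lin:con2a}, namely $AP = P\hat A - BQ$, which I rewrite as $AP + BQ = P\hat A$, or equivalently $AP = P\hat A - BQ$. The key observation is that the existence of \emph{some} matrices $\hat A$ and $Q$ satisfying this is equivalent to the column space of $AP$ being reachable as a sum of a column in $\im P$ (contributed by $P\hat A$) and a column in $\im B$ (contributed by $-BQ$). This is exactly the geometric condition \eqref{e:lin:con:AQ}.

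First I would prove the forward direction (necessity). Assuming $\hat A$ and $Q$ exist with $AP = P\hat A - BQ$, I take an arbitrary vector in $\im AP$, say $APv$ for some $v$. Then $APv = P\hat A v - BQv = P(\hat A v) + B(-Qv)$, which manifestly lies in $\im P + \im B$. Since $v$ was arbitrary, $\im AP \subseteq \im P + \im B$, giving \eqref{e:lin:con:AQ}.

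Next I would prove the converse (sufficiency). Assuming $\im AP \subseteq \im P + \im B$, I work column by column on $P$. Let $p_j$ denote the $j$-th column of $P$, so that $Ap_j$ is the $j$-th column of $AP$ and lies in $\im AP \subseteq \im P + \im B$. Hence for each $j$ there exist vectors $a_j$ and $q_j$ with $Ap_j = Pa_j + B(-q_j) = Pa_j - Bq_j$. Assembling the $a_j$ as the columns of a matrix $\hat A$ and the $q_j$ as the columns of $Q$, I obtain $AP = P\hat A - BQ$, which is precisely \eqref{e:lin:con2a}. This establishes the existence of the desired matrices and completes the equivalence.

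I do not expect a genuine obstacle here, since the statement is an elementary rephrasing of the image-containment condition in terms of solvability of a linear matrix equation; the paper itself flags the result as recalled from \cite[Lemma 2]{GP09}. The only point demanding mild care is bookkeeping the sign convention (the $-BQ$ term) consistently between the two directions and ensuring the assembled matrices $\hat A$ and $Q$ have conformant dimensions, so that $P\hat A$ and $BQ$ share the column count of $AP$; this is automatic once the columnwise construction is carried out.
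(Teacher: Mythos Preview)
Your proof is correct and entirely standard: the columnwise argument in both directions is exactly the right way to handle this, and the sign and dimension bookkeeping you flag is the only thing requiring care. Note, however, that the paper does not actually prove this lemma---it merely recalls it from \cite[Lemma~2]{GP09}---so there is no in-paper proof to compare against; your argument supplies what the paper omits.
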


%

Now, we give necessary and sufficient conditions for the
existence of matrices $\hat C_2$, $\hat E$, and $L_2$ appearing in
conditions~\eqref{e:lin:con2c1}, \eqref{e:lin:con2c2}, and \eqref{e:lin:con2e}, respectively.

\begin{lemma}\label{l:lin:con:H}
Given $P$, $C_2$, and $X^{12}$ (resp. $X^{22}$), there exists matrix $\hat C_2$ satisfying~\eqref{e:lin:con2c1} (resp. \eqref{e:lin:con2c2}) if and only if
\begin{align}\label{e:lin:con:H}
\im X^{12}C_2P\subseteq \im X^{12}H,~~(\text{resp.}~\im X^{22}C_2P\subseteq \im X^{22}H)
\end{align}
for some matrix $H$ of appropriate dimension.
\end{lemma}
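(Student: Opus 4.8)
The plan is to reduce both claims to the elementary range-inclusion criterion for right-divisibility of matrices, exactly as was done in Lemma~\ref{l:lin:con:W}. I treat the $X^{12}$ case explicitly; the $X^{22}$ case is verbatim identical. Condition~\eqref{e:lin:con2c1} reads $X^{12}C_2P=X^{12}H\hat C_2$, which is an equation of the form $A=B\hat C_2$ with $A:=X^{12}C_2P$ and $B:=X^{12}H$. The key observation is that, for given $A$ and $B$, such an equation admits a solution $\hat C_2$ precisely when every column of $A$ lies in the column space of $B$, i.e. when $\im A\subseteq\im B$.

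For the forward implication I would suppose that some matrix $\hat C_2$ satisfies $X^{12}C_2P=X^{12}H\hat C_2$. Reading this columnwise, each column of $X^{12}C_2P$ is the linear combination of the columns of $X^{12}H$ whose coefficients are supplied by the corresponding column of $\hat C_2$. Hence $\im(X^{12}C_2P)\subseteq\im(X^{12}H)$, so the stated inclusion holds for this very $H$.

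For the converse I would assume $\im(X^{12}C_2P)\subseteq\im(X^{12}H)$ for some $H$. Then for each column $a_j$ of $X^{12}C_2P$ there exists a vector $c_j$ with $a_j=(X^{12}H)c_j$; assembling the $c_j$ as the columns of a matrix $\hat C_2$ yields $X^{12}C_2P=X^{12}H\hat C_2$, which is precisely~\eqref{e:lin:con2c1}. Replacing $X^{12}$ by $X^{22}$ throughout reproduces the argument for~\eqref{e:lin:con2c2}.

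There is essentially no obstacle here beyond recognizing the equation's form. The only point that requires a little care is to keep the factor $X^{12}$ (resp.\ $X^{22}$) attached to $H$, so that $B=X^{12}H$ rather than $H$ alone plays the role of the coefficient matrix; the inclusion $\im(X^{12}C_2P)\subseteq\im X^{12}$ is automatic, but it is the sharper inclusion into $\im(X^{12}H)$ that is equivalent to solvability in $\hat C_2$.
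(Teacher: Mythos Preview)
Your argument is correct and is exactly the elementary range-inclusion criterion the paper has in mind; the paper itself states Lemma~\ref{l:lin:con:H} without proof (it is listed among the ``straightforward'' geometric conditions, in the same spirit as Lemma~\ref{l:lin:con:W}), so there is nothing to compare beyond noting that your write-up supplies the omitted details faithfully.
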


\begin{lemma}\label{l:lin:con:EL}
Given $P$, $B$, and $L_1$, there exist matrices $\hat E$ and $L_2$ satisfying~\eqref{e:lin:con2e} if and only if
\begin{IEEEeqnarray}{c}\label{e:lin:con:EL}
\im E\subseteq \im P+\im B.
\end{IEEEeqnarray}
\end{lemma}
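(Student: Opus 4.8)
The plan is to treat \eqref{e:lin:con2e} as a linear feasibility problem in the two unknown matrices $\hat E$ and $L_2$, with $E$, $P$, $B$, and $L_1$ all regarded as fixed data, and to recognize that its solvability is precisely a membership condition on $E$ in the subspace $\im P+\im B$. The key observation is that \eqref{e:lin:con2e} can be rewritten as $E+BL_1=P\hat E+BL_2$, so that the right-hand side, as $\hat E$ and $L_2$ range freely, sweeps out exactly the subspace $\im P+\im B$. Since $BL_1\in\im B\subseteq\im P+\im B$ and $\im P+\im B$ is a linear subspace, membership of $E+BL_1$ in $\im P+\im B$ is equivalent to membership of $E$ itself; this is what reduces the feasibility to condition \eqref{e:lin:con:EL}.

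For the necessity direction, I would assume that matrices $\hat E$ and $L_2$ satisfying \eqref{e:lin:con2e} are given, and simply read off the identity $E=P\hat E-B(L_1-L_2)$ as the sum of the vector $P\hat E\in\im P$ and the vector $-B(L_1-L_2)\in\im B$; this immediately yields $\im E\subseteq\im P+\im B$, which is \eqref{e:lin:con:EL}.

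For the sufficiency direction, I would assume \eqref{e:lin:con:EL}, so that $E\in\im P+\im B$, and pick vectors $v$ and $w$ of appropriate dimensions with $E=Pv+Bw$. Setting $\hat E:=v$ and $L_2:=L_1+w$ gives $-B(L_1-L_2)=Bw$, whence $P\hat E-B(L_1-L_2)=Pv+Bw=E$, which recovers \eqref{e:lin:con2e} and completes the proof.

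There is no substantial obstacle here: the statement is a direct transcription of elementary linear algebra, in the same spirit as Lemma \ref{l:lin:con:W} and Lemma \ref{l:lin:con:AQ}. The only mild subtlety worth stating explicitly in the write-up is why the presence of the fixed matrix $L_1$ does not alter the characterization, namely that translating by $BL_1$ leaves the subspace $\im P+\im B$ invariant; everything else is a one-line verification in each direction.
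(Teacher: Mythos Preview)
Your proof is correct. The paper does not supply its own proof of this lemma---it presents Lemmas~\ref{l:lin:con:W}, \ref{l:lin:con:H}, \ref{l:lin:con:EL}, \ref{l:lin:con:hatD}, and \ref{l:lin:con:FP} as ``straightforward'' geometric characterizations and omits the arguments---so your elementary linear-algebra verification is precisely what is intended, and there is nothing further to compare against.
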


Lemmas~\ref{l:lin:con:AQ},~\ref{l:lin:con:H},~and \ref{l:lin:con:EL} provide
necessary and sufficient conditions on $P$ and $H$ resulting in the
construction of matrices $\hat A$, $\hat C_2$, and $\hat E$ together with the matrices
$Q$ and $L_2$ appearing in the definition of the interface function in \eqref{e:lin:int}. Matrices $\hat F$ and $\hat C_1$ are computed as $\hat F=FP$ and $\hat C_1=C_1P$. The next lemma provides a necessary and sufficient condition on the existence of matrix $\hat D$ appearing in condition \eqref{e:lin:con2g}.

\begin{lemma}\label{l:lin:con:hatD}
Given $Z$, there exists matrix $\hat D$ satisfying~\eqref{e:lin:con2g} if and only if
\begin{IEEEeqnarray}{c}\label{e:lin:con:hatD}
\im Z\hat W\subseteq\im P,
\end{IEEEeqnarray}
for some matrix $\hat W$ of appropriate dimension.
\end{lemma}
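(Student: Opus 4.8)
The plan is to prove Lemma~\ref{l:lin:con:hatD} as a standard image-containment (solvability) characterization for the linear matrix equation $P\hat D = Z\hat W$, treating it as a condition on when $\hat D$ exists for given $Z$ (with $\hat W$ also to be chosen). First I would fix notation: condition~\eqref{e:lin:con2g} reads $P\hat D = Z\hat W$, where $P$ and $Z$ are given and we seek matrices $\hat D$ and $\hat W$. The claim is that such matrices exist if and only if $\im Z\hat W \subseteq \im P$ for some $\hat W$. The forward direction is immediate: if $P\hat D = Z\hat W$ holds, then every column of $Z\hat W$ equals $P$ times the corresponding column of $\hat D$, so $\im Z\hat W = \im P\hat D \subseteq \im P$, exhibiting the required $\hat W$.

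For the converse, suppose there is a matrix $\hat W$ with $\im Z\hat W \subseteq \im P$. I would argue column by column: let $z_k$ denote the $k$-th column of $Z\hat W$. Since $z_k \in \im Z\hat W \subseteq \im P$, there exists a vector $d_k$ with $P d_k = z_k$. Assembling the $d_k$ as the columns of a matrix $\hat D$ gives exactly $P\hat D = Z\hat W$, which is~\eqref{e:lin:con2g}. This is the essence of the standard fact that the matrix equation $PX = Y$ is solvable in $X$ if and only if $\im Y \subseteq \im P$, applied here with $Y = Z\hat W$.

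The only subtlety worth flagging is that the lemma quantifies $\hat W$ existentially on both sides, so I must be careful that the same $\hat W$ is used throughout: in the forward direction $\hat W$ is the one witnessing the original equation, and in the converse it is the one witnessing the image containment, and in each case the conclusion is produced from that very $\hat W$. I do not expect any genuine obstacle here; the result is purely a restatement of solvability of a linear system over the columns, and the proof is a two-line argument in each direction. If desired, one can phrase the converse more compactly by invoking the Moore--Penrose pseudoinverse, setting $\hat D = P^{\dagger} Z\hat W$ and noting that $P P^{\dagger}$ is the orthogonal projection onto $\im P$, so $P\hat D = PP^{\dagger}Z\hat W = Z\hat W$ precisely because $\im Z\hat W \subseteq \im P$; however, the column-by-column argument avoids introducing the pseudoinverse and keeps the proof self-contained.
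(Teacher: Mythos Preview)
Your argument is correct. The paper does not actually supply a proof of Lemma~\ref{l:lin:con:hatD}; it is presented as one of several ``straightforward'' geometric solvability conditions and left to the reader. Your column-by-column argument (equivalently, the pseudoinverse formulation $\hat D = P^{\dagger}Z\hat W$) is precisely the standard justification that the authors have in mind, and your handling of the existential quantifier on $\hat W$ is careful and correct.
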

Although condition \eqref{e:lin:con:hatD} is readily satisfied by choosing $\hat W=0$, one should preferably aim at finding a nonzero $\hat W$ to smooth later the satisfaction of compositionality condition \eqref{e:SGcondition1}.

As we already mentioned, the choice of matrix $\hat B$ is free.  One can also construct $\hat B$ as in~\eqref{e:lin:con:B} ensuring preservation of all controllable behaviors of $\Sigma$ over $\hat\Sigma$ under extra conditions given in \eqref{e:lin:con3}. Lemma 3 in \cite{GP09}, as recalled next, provides necessary and
sufficient conditions on $P$ and $C_1$ for the existence of $\hat P$,
$G$, and $T$ satisfying \eqref{e:lin:con3a}, \eqref{e:lin:con3b}, and \eqref{e:lin:con3c}.

\begin{lemma}\label{l:lin:con:CP}
Consider matrices $C_1$ and $P$ with $P$ being injective and let $\hat C_1=C_1P$. There exists
matrix $\hat P$ satisfying~\eqref{e:lin:con3a}, \eqref{e:lin:con3b}, and \eqref{e:lin:con3c}, for some matrices $G$ and $T$ of appropriate dimensions, if and only
if
\begin{IEEEeqnarray}{c}\label{e:lin:con:BB}
\im P+\ke C_1=\R^n.
\end{IEEEeqnarray}
\end{lemma}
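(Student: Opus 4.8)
The plan is to reduce the three matrix conditions to a single statement about subspaces of $\R^n$ and then to show that this statement is equivalent to \eqref{e:lin:con:BB}. Since $P$ is injective, I would first observe that condition \eqref{e:lin:con3c}, $\hat P P = I_{\hat n}$, forces $\Pi:=P\hat P$ to be idempotent, because $\Pi^2 = P(\hat P P)\hat P = P\hat P = \Pi$; moreover $\im\Pi = \im P$ and $\ke\Pi$ is a complement of $\im P$, so that $\R^n = \im P \oplus \ke\Pi$. Next, using $\hat C_1 = C_1 P$, condition \eqref{e:lin:con3a} becomes $C_1 = \hat C_1\hat P = C_1 P\hat P = C_1\Pi$, i.e.\ $C_1(I_n-\Pi)=0$; since for a projection $I_n-\Pi$ one has $\im(I_n-\Pi)=\ke\Pi$, this is exactly the kernel inclusion $\ke\Pi\subseteq\ke C_1$. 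Finally, condition \eqref{e:lin:con3b} only requires $I_n - P\hat P = GT$ for some $G,T$; as any matrix admits such a factorization with a freely chosen inner dimension (e.g.\ $G=I_n-P\hat P$, $T=I_n$), \eqref{e:lin:con3b} imposes nothing beyond the existence of $\hat P$. Thus the existence of $\hat P$, $G$, $T$ is equivalent to the existence of a left inverse $\hat P$ of $P$ whose induced projection $\Pi=P\hat P$ onto $\im P$ satisfies $\ke\Pi\subseteq\ke C_1$, equivalently to the existence of a complement $\mathcal K$ of $\im P$ in $\R^n$ with $\mathcal K\subseteq\ke C_1$.

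For necessity, if such a $\mathcal K$ exists then $\R^n = \im P\oplus\mathcal K\subseteq \im P + \ke C_1\subseteq\R^n$, which yields \eqref{e:lin:con:BB}. For sufficiency, assuming $\im P + \ke C_1 = \R^n$, I would choose any complement $\mathcal K$ of the subspace $\im P\cap\ke C_1$ inside $\ke C_1$, so that $\ke C_1 = (\im P\cap\ke C_1)\oplus\mathcal K$. Then $\im P + \mathcal K = \im P + (\im P\cap\ke C_1) + \mathcal K = \im P + \ke C_1 = \R^n$, while $\im P\cap\mathcal K \subseteq (\im P\cap\ke C_1)\cap\mathcal K = \{0\}$, so $\R^n = \im P\oplus\mathcal K$ with $\mathcal K\subseteq\ke C_1$ as required. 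From this $\mathcal K$ I recover the data of the lemma by letting $\Pi$ be the projection onto $\im P$ along $\mathcal K$ and defining $\hat P$ via $\Pi = P\hat P$, which is well defined and unique by injectivity of $P$; one checks $\hat P P = I_{\hat n}$ from $\Pi P = P$, and $C_1 P\hat P = C_1\Pi = C_1$ from $\ke\Pi = \mathcal K\subseteq\ke C_1$. Taking any factorization $I_n - \Pi = GT$ then completes the construction.

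The main obstacle is the bookkeeping in the reduction step rather than the final linear algebra: one must verify carefully that $\Pi = P\hat P$ is genuinely an idempotent with image $\im P$, that \eqref{e:lin:con3a} is precisely the kernel inclusion $\ke\Pi\subseteq\ke C_1$, and that \eqref{e:lin:con3b} is vacuous thanks to the freedom in the inner dimension of $G$ and $T$. Once the problem is rephrased as ``find a complement of $\im P$ contained in $\ke C_1$,'' the remaining argument — extracting such a complement inside $\ke C_1$ complementary to $\im P\cap\ke C_1$ — is routine, so I expect no serious difficulty there.
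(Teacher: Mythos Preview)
Your proposal is correct: the reduction of \eqref{e:lin:con3a}--\eqref{e:lin:con3c} to the existence of a projection $\Pi$ onto $\im P$ with $\ke\Pi\subseteq\ke C_1$, and the subsequent complement argument inside $\ke C_1$, are both valid. Note, however, that the paper does not give its own proof of this lemma; it is explicitly recalled from \cite{GP09} (Lemma~3 there), so there is no in-paper argument to compare against---your write-up simply supplies the missing details via the standard projection/complement characterization.
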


Similar to Lemma~\ref{l:lin:con:CP}, we give necessary and sufficient conditions on $P$ and $F$ for the existence of $\hat P$ satisfying \eqref{e:lin:con3d}.

\begin{lemma}\label{l:lin:con:FP}
Consider matrices $F$ and $P$ with $P$ being injective and let $\hat F=FP$. There exists
matrix $\hat P$ satisfying \eqref{e:lin:con3d} if and only
if
\begin{IEEEeqnarray}{c}\label{e:lin:con:BF}
\im P+\ke F=\R^n.
\end{IEEEeqnarray}
\end{lemma}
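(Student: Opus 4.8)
The plan is to prove both implications directly, observing first that the statement is the exact analogue of Lemma~\ref{l:lin:con:CP} (namely \cite[Lemma 3]{GP09}) with the output matrix $C_1$ replaced by $F$: substituting $\hat F=FP$ turns condition~\eqref{e:lin:con3d} into $F=\hat F\hat P=FP\hat P$, which is formally identical to~\eqref{e:lin:con3a}. Thus the whole lemma reduces to characterizing when the linear equation $FP\hat P=F$ admits a solution $\hat P$, and I expect no genuine obstacle beyond elementary linear algebra.

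For the ``only if'' direction I would assume such a $\hat P$ exists and deduce $\im P+\ke F=\R^n$. The key move is to use, for an arbitrary $x\in\R^n$, the trivial decomposition $x=P\hat P x+(x-P\hat P x)$. The first summand lies in $\im P$ by construction, while applying $F$ to the second summand gives $F(x-P\hat P x)=Fx-FP\hat P x=Fx-Fx=0$ by hypothesis, so $x-P\hat P x\in\ke F$. Hence every $x$ lies in $\im P+\ke F$, giving $\R^n\subseteq\im P+\ke F$ and therefore equality.

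For the ``if'' direction I would assume $\im P+\ke F=\R^n$ and construct $\hat P$ explicitly. First choose a subspace $V'\subseteq\im P$ complementary to $\ke F$, i.e.\ $V'\oplus\ke F=\R^n$; such a $V'$ exists precisely because $\im P+\ke F=\R^n$ (take a complement of $\im P\cap\ke F$ inside $\im P$). Let $\Pi$ be the projection onto $V'$ along $\ke F$, so that $\im\Pi=V'\subseteq\im P$ and $x-\Pi x\in\ke F$, whence $F\Pi=F$. Since $P$ is injective it admits a left inverse $P^{+}$ (for instance $P^{+}=(P^TP)^{-1}P^T$) satisfying $P^{+}P=I_{\hat n}$ and $PP^{+}y=y$ for all $y\in\im P$. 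I would then set $\hat P\Let P^{+}\Pi$: since $\Pi x\in\im P$ we get $P\hat P x=PP^{+}\Pi x=\Pi x$, and therefore $FP\hat P x=F\Pi x=Fx$ for all $x$, i.e.\ $FP\hat P=F$, which is exactly~\eqref{e:lin:con3d}.

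The only points needing minor care are the existence of the complement $V'$ inside $\im P$ and the well-definedness of the left inverse $P^{+}$, both of which follow immediately from $\im P+\ke F=\R^n$ and the injectivity of $P$, respectively. If one prefers maximal brevity, the ``if'' direction can instead be obtained by invoking Lemma~\ref{l:lin:con:CP} verbatim with the pair $(C_1,\hat C_1)$ replaced by $(F,\hat F)$, since~\eqref{e:lin:con3d} coincides with~\eqref{e:lin:con3a} under this substitution.
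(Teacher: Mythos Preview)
Your proof is correct, and it matches the paper's approach: the paper does not spell out a proof for this lemma, instead introducing it with the phrase ``Similar to Lemma~\ref{l:lin:con:CP}'' and relying on the evident analogy with \cite[Lemma~3]{GP09} under the substitution $(C_1,\hat C_1)\mapsto(F,\hat F)$. You identified exactly this analogy and, in addition, supplied the elementary linear-algebra details, so your write-up is in fact more complete than what appears in the paper.

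One minor remark on your closing sentence: Lemma~\ref{l:lin:con:CP} asserts the existence of $\hat P$ satisfying \emph{three} conditions \eqref{e:lin:con3a}--\eqref{e:lin:con3c}, not just the analogue of \eqref{e:lin:con3d}, so invoking it ``verbatim'' gives more than needed for the ``if'' direction here (which is fine), but it does not directly yield the ``only if'' direction of the present lemma, since that direction requires only the single hypothesis $F=\hat F\hat P$. Your explicit argument via the decomposition $x=P\hat Px+(x-P\hat Px)$ handles this cleanly.
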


Note that conditions \eqref{e:lin:con11}, \eqref{e:lin:con:W}, and \eqref{e:lin:con:AQ}-\eqref{e:lin:con:hatD} (resp. \eqref{e:lin:con11}, \eqref{e:lin:con:W}, and \eqref{e:lin:con:AQ}-\eqref{e:lin:con:BF}) complete the characterization of mainly matrices $P$ and $Z$ which together with the matrices $\{A,B,C_1,C_2,D,E,F\}$ result in the construction of matrices
$\{\hat A,\hat B,\hat C_1,\hat C_2,\hat D,\hat E,\hat F\}$, where $\hat B$ can be chosen freely with appropriate dimensions (resp. $\hat B$ is computed as in \eqref{e:lin:con:B}). 

We summarize the construction of the abstraction $\hat \Sigma$, storage function $V$ in \eqref{e:lin:sf}, and its corresponding interface function in \eqref{e:lin:int} in Table~\ref{tb:1}.

\begin{table}[h]
\caption{Construction of $\hat\Sigma=(\hat A,\hat B,\hat C_1,\hat C_2,\hat D,\hat E,\hat F,\varphi)$, the corresponding storage function $V$ in \eqref{e:lin:sf}, and interface function in \eqref{e:lin:int} for a given $\Sigma=(A,B,C_1,C_2,D,E,F,\varphi)$.}\label{tb:1}
\centering
\begin{tabular}{|p{0.0\columnwidth}l|}
\hline
1.&Compute matrices $\widehat M$, $K$, $L_1$, $Z$, $X^{11}$, $X^{12}$, and $X^{22}$ satisfying \eqref{e:lin:con11} and \eqref{e:lin:con:W}.\\
2.&Pick an injective $P$ with the lowest rank satisfying \eqref{e:lin:con:AQ}-\eqref{e:lin:con:hatD} (resp. \eqref{e:lin:con:AQ}-\eqref{e:lin:con:BF});\\
3.&Compute $\hat A$ and $Q$ from~\eqref{e:lin:con2a};\\
4.&Compute $\hat E$ and $L_2$ from~\eqref{e:lin:con2e};\\
5.&Compute $\hat F=FP$;\\
6.&Compute $\hat C_1=CP$;\\
6.&Compute $\hat C_2$ satisfying $H\hat C_2=CP$ for some $H$;\\
7.&Compute $\hat D$ satisfying $P\hat D=Z\hat W$ for some (rather nonzero) $\hat W$;\\
8.&Choose $\hat B$ freely (resp. $\hat B=[\hat PB\;\; \hat PAG]$);\\
9.&Compute $\widetilde R$, appearing in \eqref{e:lin:int}, from \eqref{e:lin:con:tildeR};\\
\hline
\end{tabular}
\end{table}

\subsection{Feasibility of LMI \eqref{e:lin:con11}}
In this subsection we discuss sufficient and necessary feasibility conditions for the LMI \eqref{e:lin:con11} in the restrictive case of $X^{12}=0$ and $X^{11}\succeq \frac{Z^T\widehat MZ}{\pi}$ for any positive constant $\pi<\widehat \kappa$, where $0$ denotes a zero matrix of appropriate dimension. To do so, we convert the feasibility conditions for the restricted version of LMI \eqref{e:lin:con11} into the ones for two dual control problems. When $b=\infty$ in \eqref{e:lin:con11}, the feasibility of restricted \eqref{e:lin:con11} is dual to the one of designing a controller rendering a linear system strictly positive real (SPR) \cite{murat2}. When $b<\infty$, the duality is with a linear $\mathcal{L}_2$-gain assignment control problem \cite[Section 13.2]{isidori}.

When $b=\infty$, the restricted version of LMI \eqref{e:lin:con11} reduces to 
\begin{align}\label{e:lin:con21}
\left(A+BK\right)^T\widehat M+\widehat M\left(A+BK\right)&\prec 0, \\\label{e:lin:con31}
\widehat M(BL_1+E)+F^T&=0.
\end{align}

By virtue of the Positive-Real Lemma \cite{vladimir}, conditions \eqref{e:lin:con21} and \eqref{e:lin:con31} mean that the linear control system
\begin{IEEEeqnarray}{c}\label{e:lin:sys2}\Sigma:\left\{
  \begin{IEEEeqnarraybox}[\relax][c]{rCl}
   \dot \xi&=&A\xi+B\upsilon+E\omega,\\
\zeta&=&-F\xi,%
  \end{IEEEeqnarraybox}\right.
\end{IEEEeqnarray}
is enforced SPR from the disturbance $\omega$ to the output $\zeta$ by the control law 
\begin{align}\label{SPR}
\upsilon=K\xi+L_1\omega.
\end{align}

Therefore, when $b=\infty$, the feasibility of the restricted version of LMI \eqref{e:lin:con11} is dual to the feasibility of the control problem in which the system \eqref{e:lin:sys2} is enforced SPR by the control law \eqref{SPR}.

When $b<\infty$, using the Schur complement of $-2/b$, one can readily verify that the restricted version of LMI \eqref{e:lin:con11} is equivalent to
\begin{align*}
\left(A+BK+\frac{b}{2}(BL_1+E)F\right)^T\widehat M&+\widehat M\left(A+BK+\frac{b}{2}(BL_1+E)F\right)\\\notag&+\frac{b}{2}\widehat M(BL_1+E)(BL_1+E)^T\widehat M+\frac{b}{2}F^TF\prec0,
\end{align*}
which means that the $\mathcal{L}_2$-gain of the dual system \eqref{e:lin:sys2} from input $\widetilde\omega:=\omega+(b/2)\zeta$ to output $\zeta$ is enforced to be strictly less than $2/b$ by the control law $\upsilon=K\xi+L_1\omega$ \cite[Section 13.2]{isidori}.


We refer the interested readers to \cite[Theorem 3]{murat1} deriving sufficient and necessary feasibility conditions for the restricted version of LMI \eqref{e:lin:con11} by looking into the corresponding dual control problems, namely, enforcing SPR and assigning a linear $\mathcal{L}_2$-gain. 

Note that in the context of observer design and observer-based control, the feasibility of those dual control problems have been investigated for several physical problems in \cite{murat1,fan,murat3,scholtz}.

\section{Example}\label{case}
Consider a linear control system $\Sigma=(-L,I_n,C)$ satisfying 
\begin{IEEEeqnarray*}{c}
 \Sigma:\left\{
  \begin{IEEEeqnarraybox}[\relax][c]{rCl}
    \dot\xi&=&-L\xi+\upsilon,\\
    \zeta&=&C\xi,%
  \end{IEEEeqnarraybox}\right.
\end{IEEEeqnarray*}
for some matrix $C\in\R^{q\times n}$ and $L\in\R^{n\times n}$. Assume $L$ is the Laplacian matrix \cite{godsil} of an undirected graph, {\it e.g.}, for a complete graph:
\begin{equation}\label{complete}
L=\begin{bmatrix}n-1 & -1 & \cdots & \cdots & -1 \\  -1 & n-1 & -1 & \cdots & -1 \\ -1 & -1 & n-1 & \cdots & -1 \\ \vdots &  & \ddots & \ddots & \vdots \\ -1 & \cdots & \cdots & -1 & n-1\end{bmatrix},
\end{equation}
and $C$ has the following block diagonal structure$$C=\mathsf{diag}(C_{11},\ldots,C_{1N}),$$where $C_{1i}\in\R^{q_{1i}\times n_i}$. We partition $\xi$ as $\xi=[\xi_1;\ldots;\xi_N]$ and $\upsilon$ as $\upsilon=[\upsilon_1;\ldots;\upsilon_N]$ where $\xi_i$ and $\upsilon_i$ are both taking values in $\mathbb{R}^{n_i}$, $\forall i\in[1;N]$. Now, by introducing $\Sigma_i=(0_{n_i},I_{n_i},C_{1i},I_{n_i},I_{n_i})$ satisfying
\begin{IEEEeqnarray*}{c}
 \Sigma_i:\left\{
  \begin{IEEEeqnarraybox}[\relax][c]{rCl}
    \dot\xi_i&=&\omega_i+\upsilon_i,\\
    \zeta_{1i}&=&C_{1i}\xi_i,\\
    \zeta_{2i}&=&\xi_i,%
  \end{IEEEeqnarraybox}\right.
\end{IEEEeqnarray*}
one can readily verify that $\Sigma=\mathcal{I}(\Sigma_1,\ldots,\Sigma_N)$ where the coupling matrix $M$ is given by $M=-L$.

Our goal is to aggregate each $\xi_i$ taking values in $\mathbb{R}^{n_i}$ into a scalar-valued $\hat{\xi}_i$, governed by $\hat\Sigma_i=(0,1,C_{1i}\mathbf{1}_{n_i},1,1)$ which satisfies:
\begin{IEEEeqnarray*}{c}
 \hat\Sigma_i:\left\{
  \begin{IEEEeqnarraybox}[\relax][c]{rCl}
    \dot{\hat\xi}_i&=&\hat\omega_i+\hat\upsilon_i,\\
    \hat\zeta_{1i}&=&C_{1i}\mathbf{1}_{n_i}\hat\xi_i,\\
    \hat\zeta_{2i}&=&\hat\xi_i.%
  \end{IEEEeqnarraybox}\right.
\end{IEEEeqnarray*}

One can readily verify that, for any $i\in[1;N]$, conditions \eqref{e:lin:con2f} and \eqref{e:lin:con11} are satisfied with $\widehat M_i=I_{n_i}$, $K_i=-\lambda I_{n_i}$, for some $\lambda>0$, $\widehat\kappa_i=2\lambda$, $Z_i=I_{n_i}$, $L_{1i}=0$, $W_i=I_{n_i}$, $X^{11}=0$, $X^{22}=0$, and $X^{12}=X^{21}=I_{n_i}$, where $0$ denotes zero matrices of appropriate dimensions. Moreover, for any $i\in[1;N]$, $P_i=\mathbf{1}_{n_i}$ satisfies conditions \eqref{e:lin:con2} with $Q_i={L_2}_i=0_{n_i}$, $H=\mathbf{1}_{n_i}$, and $\hat W_i=\mathbf{1}_{n_i}$. Hence, function $V_i(x_i,\hat{x}_i)=(x_i-\mathbf{1}_{n_i}\hat x_i)^T(x_i-\mathbf{1}_{n_i}\hat x_i)$ is a storage function from $\hat\Sigma_i$ to $\Sigma_i$ satisfying condition \eqref{e:sf:1} with $\alpha_{i}(r)=\frac{1}{\lambda_{\max}(C_{1i}^TC_{1i})}r^2$ and condition \eqref{inequality1} with $\eta(r)=-2\lambda r$, $\rho_{\text{ext}}(r)=0$, $\forall r\in\R_{\ge0}$, $W_i=I_{n_i}$, $\hat W_i=H_i=\mathbf{1}_{n_i}$, and 
\begin{equation}\label{passivity}
X_i=\begin{bmatrix} 0 & I_{n_i} \\ I_{n_i} &  0 \end{bmatrix},
\end{equation}
where the input $u_i\in\R^{n_i}$ is given via the interface function in \eqref{e:lin:int} as $u_i=-\lambda(x_i-\mathbf{1}_{n_i}\hat x_i)+\mathbf{1}_{n_i}\hat u_i$. Note that $\widetilde R_i=\mathbf{1}_{n_i}$ was computed as in \eqref{e:lin:con:tildeR}.

Now, we look at $\hat\Sigma=\mathcal{I}(\hat\Sigma_1,\ldots,\hat\Sigma_N)$ with a coupling matrix $\hat M$ satisfying condition \eqref{e:SGcondition1} as follows:
\begin{equation}\label{graph}
-L\mathsf{diag}(\mathbf{1}_{n_1},\ldots,\mathbf{1}_{n_N})=\mathsf{diag}(\mathbf{1}_{n_1},\ldots,\mathbf{1}_{n_N})\hat M.
\end{equation}

Note that the existence of $\hat M$ satisfying (\ref{graph}) for a graph Laplacian $L$ means that the $N$ subgraphs form an {\it equitable partition} of the full graph \cite{godsil}. Although this restricts the choice of a partition in general, for the complete graph (\ref{complete}) any partition is equitable.

Choosing $\mu_1=\cdots=\mu_N=1$ and using $X_i$ in (\ref{passivity}), matrix $X$ in \eqref{matrix} reduces to
$$
X=\begin{bmatrix} 0 & I_{n} \\ I_{n} &  0 \end{bmatrix},
$$
and condition \eqref{e:SGcondition} reduces to
$$
\begin{bmatrix} -L \\ I_n \end{bmatrix}^TX\begin{bmatrix} -L \\ I_n \end{bmatrix}=-L-L^T\preceq 0
$$
which always holds without any restrictions on the size of the graph. In order to show the above inequality, we used $L=L^T\succeq0$ which is always true for Laplacian matrices of undirected graphs. 

For the sake of simulation, we fix $n=9$ and $$C=\begin{bmatrix}1& 0& 0& 0& 0& 0& 0& 0& 0\\0 &0& 0& 0& 1& 0& 0& 0& 0\\0& 0& 0& 0& 0& 0& 0& 0& 1\end{bmatrix},$$where 
\begin{align*}
C_{11}=\begin{bmatrix}1& 0& 0\end{bmatrix},C_{12}=\begin{bmatrix}0& 1& 0\end{bmatrix},C_{13}=\begin{bmatrix}0& 0& 1\end{bmatrix}.
\end{align*}
Let us now synthesize a controller for $\Sigma$ via the abstraction $\hat\Sigma$ to enforce the specification, defined
by the LTL formula \cite{katoen08}
\begin{equation}
\label{LTL}
\psi = \square S\wedge\Big( \underset{i=1}{\overset{5}{\bigwedge}} \square(\lnot O_i)\Big) \wedge  \square\Diamond T_1 \wedge \square\Diamond T_2, 
\end{equation}
which requires that any output trajectory $\zeta$ of the closed loop
system evolves inside the set $S$, avoids sets $O_i$, $i\in[1;5]$, indicated with blue boxes in Figure \ref{fig1}, and visits each $T_i$, $i\in[1;2]$, indicated with red boxed in Figure \ref{fig1}, infinitely often. We use \texttt{SCOTS} \cite{scots} to
synthesize a controller for $\hat\Sigma$ to enforce \eqref{LTL}. In the synthesis
process we restricted the abstract inputs to $\hat u_1,\hat u_2,\hat u_3\in[-14,14]$. Given that we can set the initial
states of $\Sigma$ to $x_i = P_i\hat x_i$, so that $V_i (x_i, \hat x_i) = 0$, and since $\rho_{\text{ext}}(r)=0$, $\forall r\in\R_{\ge0}$, we obtain $\Vert \zeta(t)-\hat\zeta(t) \Vert=0$ for all $t\geq0$. A closed-loop output trajectory of $\Sigma$ is illustrated in Figure \ref{fig1}. Note that it would not have been possible to synthesize
a controller using \texttt{SCOTS} for the original 9-dimensional system $\Sigma$, without the 
3-dimensional intermediate approximation $\hat\Sigma$.

\begin{figure}
\begin{center}
\includegraphics[width=10cm]{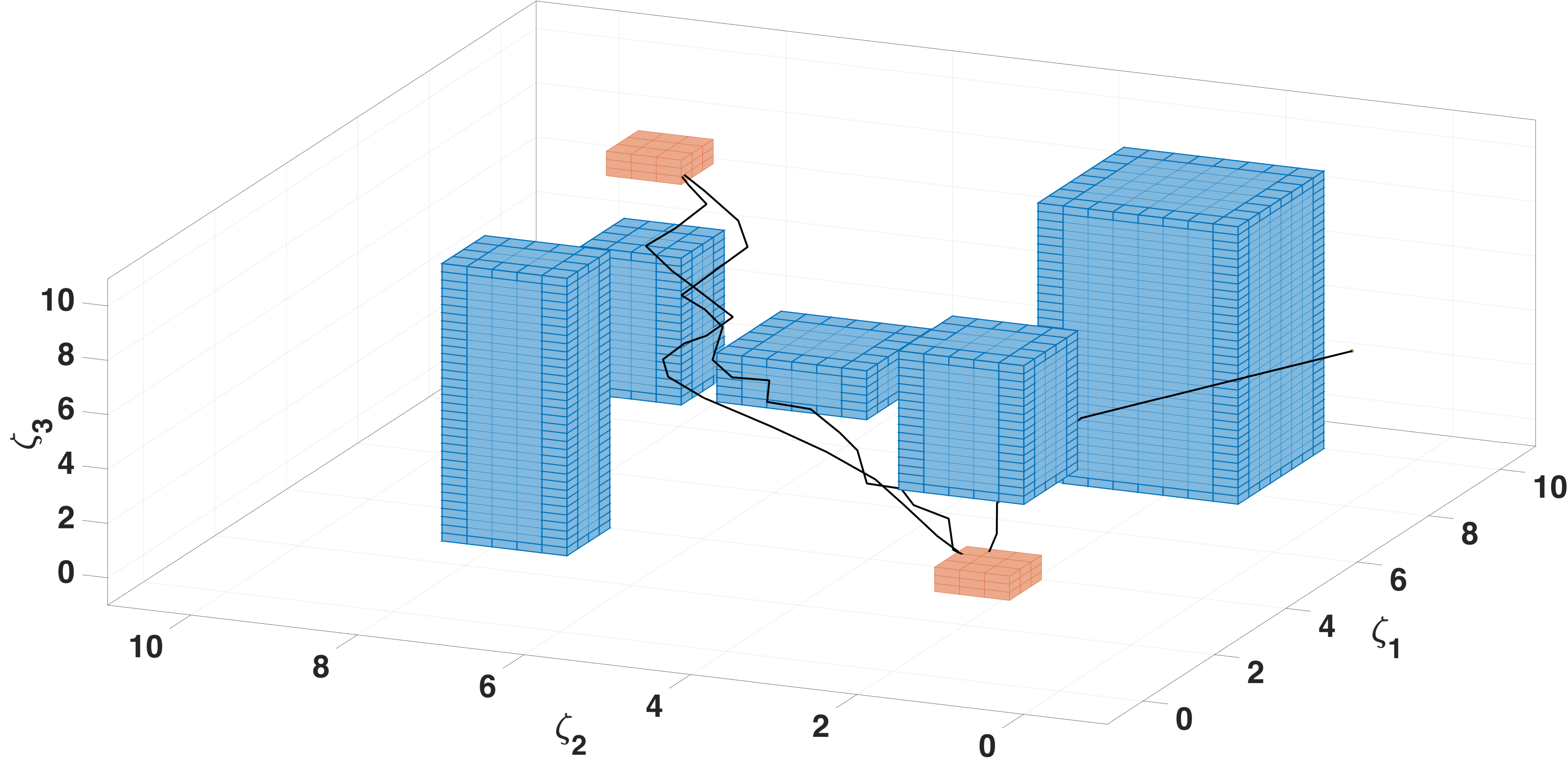}
\caption{The specification with closed loop output trajectory of $\Sigma$. The sets $S$, $O_i$, $i\in[1;5]$, and $T_i$, $i\in[1;2]$ are given by: $S=[0,10]^3$, $T_1=[1,2]^3$ and $T_2=[8,9]^3$, $O_1=[4,6]^3$, $O_2=[7,9]\times[1, 3]\times[0,10]$, $O_3=[2,3]\times[7, 8]\times[0,10]$, $O_4=[1,2]\times[1,2]\times[5,10]$, and $O_5=[8,9]\times[8,9]\times[0,5]$.}
\end{center}
\label{fig1}
\end{figure}

\begin{remark}
This scale-free result highlights the advantage of dissipativity-type over small-gain type conditions proposed in \cite{majid17,majid20}: the storage function $V_i$ from $\hat\Sigma_i$ to $\Sigma_i$ in this example also satisfies the requirements of a simulation function defined in \cite{majid17,majid20}; however, the resulting small-gain type condition, {\it e.g.}, for $L$ in (\ref{complete}) reduces to $\frac{n-1}{n-1+\lambda}<1$ which involves the spectral radius\footnote{The spectral radius of a square matrix $A\in\R^{n\times n}$, denoted by $\rho(A)$, is defined as $\rho(A):=\max\{\vert\lambda_1\vert,\cdots,\vert\lambda_n\vert\}$ where $\lambda_1,\ldots,\lambda_n$ are eigenvalues of $A$.} of $L$ ($\rho(L)=n$). Hence, using the results in \cite{majid17,majid20}, one can readily verify that as the number of components increases, {\it e.g.} $n\to\infty$, the quality of approximation deteriorates unless the interface gain $\lambda$ is increasing with $n$ which is not desirable because it results in high amplitude inputs $u_i$.
\end{remark}

\section{Conclusion}
In this paper, we proposed for the first time a notion of so-called storage function relating a concrete control system to its abstraction by quantifying their joint input-output correlation. This notion was adapted from the one of storage function from dissipativity theory. Given a network of control subsystems together with their corresponding abstractions and storage functions, we provide compositional conditions under which a network of abstractions approximate the original network and the approximation error can be quantified compositionally using the storage functions of the subsystems. Finally, we provide a procedure for the construction of abstractions together with their corresponding storage functions for a class of nonlinear control systems by using the bounds on the slope of system nonlinearities. One of the main advantages of the proposed results here based on a dissipativity-type condition in comparison with the existing ones based on a small-gain type condition is that the former can enjoy specific interconnection matrix and provide scale-free compositional conditions (cf. Section \ref{case}).

\section{Acknowledgments}
The authors would like to thank Mahmoud Khaled for the simulation in Section \ref{case}.

\bibliographystyle{alpha}
\bibliography{reference}

\end{document}